\newtheoremstyle{iet}
  {12pt}
  {\topsep}
  {\normalfont}
  {0pt}
  {\normalfont\itshape}
  {:}
  {3pt}
  {\thmname{#1}\thmnumber{ #2}\thmnote{#3}}
\theoremstyle{iet}
 \newtheorem{theorem}{Theorem}
 \newtheorem{definition}{Definition}
 \newtheorem{example}{Example}
 \newtheorem{remark}{Remark}
 \newtheorem{lemma}{Lemma}
\renewcommand{\maketitle}{\bgroup\setlength{\parindent}{0pt}

\begin{flushleft}
  \textbf{\Large\@title}

\vspace*{1cm}
  \@author
\end{flushleft}\egroup
} \makeatother \makeatletter
\renewcommand\@biblabel[1]{#1.}
\begin{document}




\title{State bounding for positive coupled differential - difference equations with bounded disturbances}

\author{Phan Thanh Nam$^{1}$, Thi-Hiep Luu$^1$\\
$^1$ \textit{Department of Mathematics, Quynhon University, Binhdinh, Vietnam}
}

\maketitle

%

\noindent\textbf{Abstract:}
In this paper, the problem of finding state bounds is considered, for the first time, for a class of positive time-delay coupled differential-difference equations (CDDEs) with bounded disturbances. First, we present a novel method, which is based on nonnegative matrices and optimization techniques, for computing a  like-exponential componentwise upper bound of the state vector of the CDDEs without disturbances. The main idea is to establish bounds of the state vector on finite-time intervals and then, by using the solution comparison method and the linearity of the system,  extend to infinite time horizon. Next, by using
state transformations, we extend the obtained results to a class of CDDEs with bounded disturbances. As a result, componentwise upper bounds, ultimate bounds and invariant set of the perturbed system are obtained. The feasibility of obtained results are illustrated through a numerical example.

%
%
%
%
%
%
%
%
%
%
%
%
%
%
%

\section{Introduction}\label{sec1}
Coupled differential-difference equations (CDDEs) are dynamical systems,
 which include a differential equation coupled with a difference equation.
Due to the fact that there are many systems in engineering  such as electrical systems, fluid systems, neutral systems and so on, which are described by  (or reformulated into) CDDEs, the  stability problem for classes of CDDEs has attracted much research attention during the past decades \cite{Niculescu:01,Rasvan:06,Pepe:08,Gu:09,Gu:10,Li:10,Gu:11,Shen:15,Ngoc:18,Pathirana:18}.
The most widely used method to this problem is based on the Lyapunov-Krasovskii functionals combining with linear matrix inequality technique (see \cite{Pepe:08,Gu:09,Gu:10,Gu:11,Li:10} and the reference therein). Recent, by exploiting
properties Metzler/Schur matrices combining with the comparison method, the authors \cite{Shen:15}, for the first time, presented another method and reported a new result on stability of a class of positive CDDEs with bounded time-varying delays. The topic on stability analysis of positive CDDEs by using the second method has gained an quickly increasing research attention in the recent years \cite{Ngoc:18,Pathirana:18,Cui:18a,Cui:18b,Sau:16,Sau:18}.

Disturbances are usually unavoidable in practical engineering systems due to many reasons such as external noises, measurement errors, modeling inaccuracies, linear approximation and so on. In the presence of disturbances, in general, it is hard to achieve asymptotic stability for perturbed dynamical systems. However, under the assumption that disturbances are bounded by a known bound, the bounded-ness/the convergence within a bounded set can be guaranteed. Hence, the topic on  state bounding/reachable set bounding/robust convergence  for classes dynamical systems perturbed by unknown-but-bounded disturbances has been an important issue in control theory and has attracted significant research attention during the past decades \cite{Boyd:94,Khalil:02,Fridman:03,Kofman:07,Kwon:11,Haimovich:13,Feng:15,Lam:15,Du:16,
Nam:15a,Nam:15b,Nam:16a,Nam:16b,Thuan:17,Li:17,Liu:18,Trinh:15,Trinh:18}.  There are two commonly used approaches to this problem. For classes of linear systems whose matrices are constant, the most widely used approach  is based on the Lyapunov method combining with linear matrix inequality technique \cite{Fridman:03,Kwon:11,Feng:15,Lam:15,Nam:15a,Thuan:17,Li:17,Liu:18,Trinh:15,Trinh:18}. For classes of positive linear systems, the   second widely used approach is based on properties of Metzler/Schur matrices combining with the solution comparison method \cite{Kofman:07,Haimovich:13,Du:16,Nam:15b,Nam:16a,Nam:16b}. It is worthy to note that the second approach combining with the comparison method is also very useful for classes of nonlinear/time-varying systems which are bounded by positive linear systems \cite{Nam:15b,Nam:16a,Nam:16b}.

By using the first approach, the authors \cite{Feng:15} reported an result on reachable set bounding for a class of perturbed time-delay singular systems, which includes class of CDDEs as a special case. Later, some its extensions to more general classes of singular systems has also given in \cite{Li:17,Liu:18}. However, so far, there has not been any result, which is obtained by using the second approach, on state bounding for positive CDDEs/singular systems with bounded disturbances. Motivated by this, in the paper, we study about the  problem of finding state bounds for a class of positive CDDEs with bounded disturbances by using the second approach. We present a novel method to derive componentwise state bounds for the positive CDDEs with bounded disturbances,including three main steps: (i) finite-time convergence for linear positive systems; (ii) like-exponential componentwise upper bound for CDDEs without disturbances;  and  (iii) state bounding for CDDEs with bounded disturbances.

The paper is organized as follows. After the introduction, the
problem statement and preliminaries are introduced in Section 2.
The main results are given in Section 3. A numerical example with
simulation results are given in Section 4.
 Finally, a conclusion is drawn in Section 5.
\section{Problem statement and preliminaries}
\noindent{\em Notations:}
$\mathbb{R}^n
(\mathbb{R}_{0,+}^n, \mathbb{R}_{+}^n)$ is the
$n$-dimensional (nonnegative, positive) vector space;
 $e_i=[0_{1\times(i-1)}$ $1 \ 0_{1\times(n-i)}]^T\in \mathbb{R}^n$
 is the $i^{th}$-unit vector in $\mathbb{R}^n$;
 $l_j=[0_{1\times(j-1)}$ $1 \ 0_{1\times(m-j)}]^T\in \mathbb{R}^m$
 is the $j^{th}$-unit vector in $\mathbb{R}^m$; $\overline{1,n}=\{1,2,\cdots,n\}$;
given three vectors $x=[x_1\ x_2\ \cdots\ x_n]^T \in \mathbb{R}^{n},$
$y =[y_1\ y_2\ \cdots\ y_n]^T \in \mathbb{R}^{n}$, $q=[q_1\ q_2\
\cdots\ q_n]^T \in \mathbb{R}_{0,+}^{n},$ two $n\times
n$ matrices $A=[a_{ij}]$ and $B=[b_{ij}]$, the following notations will
be used in our development:   $x \prec y (\preceq y)$ means that
$x_i<y_i(\leq y_i),\forall i\in\overline{1,n}$; $A \prec B (\preceq
B)$ means that $a_{ij}< b_{ij} (\leq b_{ij}),\forall
i,j\in\overline{1,n}$;  $A$ is nonnegative if $0 \preceq A$; $A$ is
a Metzler matrix if $a_{ij}\geq 0, \forall i,j\in\overline{1,n},
i\not=j$; $\mathcal{B}(0,q)=\{ x\in
\mathbb{R}^{n}_{0,+}:\ x\preceq q\}$ is an orthotope (hyperrectangle) in
$\mathbb{R}^{n}_{0,+}$;
 $s(A)=\max\{Re(\lambda): \lambda \in \sigma(A)\}$ stands for the spectral abscissa of
 a matrix $A$; $\rho(A)=\max\{|\lambda|: \lambda \in \sigma(A)\}$
stands for the spectral radius of a matrix $A$. $A$ is
a Schur matrix if $\rho(A)<1$;
The computations, such as minimum, maximum of a set of finite vectors, limitation of  a vector-valued function, etc., are understood in the component-wise sense.\\

Consider the linear CDDEs with bounded time-varying delays
\begin{align}\label{1}
\begin{split}
\dot{x}(t)&=Ax(t)+By(t-h_1(t))+\omega(t),\ t\geq t_0\geq 0, \\
y(t)&=Cx(t)+Dy(t-h_2(t))+d(t),
\end{split}
\end{align}

where $x(.)\in \mathbb{R}_{0,+}^n$, $y(.)\in \mathbb{R}_{0,+}^m$ are the state
vectors. Matrices $A\in \mathbb{R}^{n\times n}$, $B\in \mathbb{R}_{0,+}^{n\times m}$,
$C\in \mathbb{R}_{0,+}^{m\times n}$ and $D\in \mathbb{R}_{0,+}^{m\times m}$ are known. $D$ is assumed to be a Schur matrix. The disturbance vectors $\omega(.)\in \mathbb{R}_{0,+}^n$, $d(.)\in \mathbb{R}_{0,+}^m$ are unknown but assumed to be bounded by known bounds, i.e.,
\begin{align}
0&\preceq \omega(t)\preceq \overline{\omega},\ \ \forall t\geq t_0, \label{2}\\
0 &\preceq d(t)\preceq \overline{d},\ \ \forall t\geq t_0,\label{3} \vspace{-0.2cm}
\end{align}
where $\overline{\omega},\overline{d}$ are two known vectors.
The unknown time-varying delays,
$h_1(.)\in \mathbb{R}_{0,+}$, $ h_2(.)\in \mathbb{R}_{0,+}$ are continuous, not necessary to be differential and are also assumed to be bounded, i.e.,
\begin{align}\label{4}
\max_{t\geq t_0}\max\{h_1(t), h_2(t)\}\leq h_M,
\end{align}
where $h_M$ is a known constant.
The initial condition of system \eqref{1} is given by $x(t_0)=\psi(t_0)$,
$y(s)=\phi(s), \ s \in [t_0-\tau_M,t_0)$. The initial values $\psi(t_0)$ and $\phi(.)$ are unknown but assumed to be bounded by known bounds, i.e.,
\begin{align}
0&\preceq\psi(t_0) \preceq  \overline{\psi}, \label{5}\\
0&\preceq\phi(s) \preceq  \overline{\phi},\ \forall s \in [t_0-h_M,t_0), \label{6}
\end{align}
where $\overline{\psi}, \overline{\phi}$ are known nonnegative constant vectors. Let us denote by $x(t,t_0,\psi,\phi,\omega)$
and $y(t,t_0,\psi,\phi,d)$ the state vectors  with the initial
values ($\psi,\phi$) and disturbances $(\omega(t), d(t))$ of system \eqref{1}.

The objective of this paper is to find as small as possible componentwise upper bounds of  the state vectors of system \eqref{1}. Concisely, we
construct two decreasing vector-valued functions which  are componentwise upper bounds of the two state vectors $x(t,t_0,\psi,\phi,\omega)$ and $y(t,t_0,\psi,\phi,d)$.\\

First, we recall the definition of positive system.
\begin{definition}\label{defn2} \rm (\cite{Kaczorek:02}) System \eqref{1}
is said to be positive if for any non-negative initial values,
$\psi(t_0)\succeq 0$, $\phi(s)\succeq 0,  s \in [t_0-h_M,t_0)$, the state
trajectories of system \eqref{1} satisfy that
$x(t,t_0,\psi,\phi,\omega)\succeq 0$ and $y(t,t_0,\psi,\phi,d)\succeq
0$, for all $t \geq t_0$.
\end{definition}
The following lemmas are needed for our development.
\begin{lemma}\label{lem1}(\cite{Berman:94}). (i) Let $M\in \mathbb{R}^{n\times
n}$ be a nonnegative matrix. Then, the following statements are equivalent:
$(i_1)$ $M$ is Schur stable;
$(i_2)$ $(M-I)q\prec 0$ for some $q \in \mathbb{R}_+^n$;
$(i_3)$ $(I-M)^{-1}\succeq 0$.
\\
(ii) Let $M\in \mathbb{R}^{n\times
n}$ be a Metzler matrix. Then, the following statements are equivalent:
$(ii_1)$ $M$ is Hurwitz stable; $(ii_2)$
$Mq\prec 0$ for some $q \in \mathbb{R}_+^n$;  $(ii_3)$ $M^{-1}\preceq 0$.
\end{lemma}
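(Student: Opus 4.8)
The plan is to handle part (i) first --- it carries all the analytic content --- and then obtain part (ii) from it by a routine diagonal shift. The engine is the Perron--Frobenius theorem: for a nonnegative matrix $M$, the spectral radius $\rho(M)$ is an eigenvalue of $M$, and both $M$ and $M^{\T}$ admit nonnegative eigenvectors associated with it. I would prove the three statements of (i) equivalent via the cycle $(i_1)\Rightarrow(i_3)\Rightarrow(i_2)\Rightarrow(i_1)$. For $(i_1)\Rightarrow(i_3)$: when $\rho(M)<1$ the Neumann series $\sum_{k\ge 0}M^k$ converges to $(I-M)^{-1}$, and each $M^k\succeq 0$, so $(I-M)^{-1}\succeq 0$. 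For $(i_3)\Rightarrow(i_2)$: fix any $v\in\mathbb{R}_+^n$ and put $q=(I-M)^{-1}v\succeq 0$; then $(I-M)q=v\succ 0$, so componentwise $q_i=v_i+\sum_j M_{ij}q_j\ge v_i>0$, hence $q\in\mathbb{R}_+^n$ and $(M-I)q=-v\prec 0$. For $(i_2)\Rightarrow(i_1)$: take $q\in\mathbb{R}_+^n$ with $Mq\prec q$ and a nonzero left Perron eigenvector $w\succeq 0$, $w^{\T}M=\rho(M)w^{\T}$; then $\rho(M)\,w^{\T}q=w^{\T}(Mq)<w^{\T}q$, and since $w^{\T}q>0$ we conclude $\rho(M)<1$.

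For part (ii), pick $\alpha>0$ large enough that $N:=M+\alpha I$ is nonnegative. Because $s(N)=\rho(N)$ for a nonnegative matrix and the spectrum of $M$ is that of $N$ shifted by $-\alpha$, we have $s(M)<0\iff\rho(N)<\alpha\iff\tfrac{1}{\alpha}N$ is Schur. Now apply part (i) to $\tfrac{1}{\alpha}N$ and simplify: the condition $(\tfrac{1}{\alpha}N-I)q\prec 0$ is the same as $Mq\prec 0$, and $(I-\tfrac{1}{\alpha}N)^{-1}=\alpha(\alpha I-N)^{-1}=-\alpha M^{-1}$, so $(I-\tfrac{1}{\alpha}N)^{-1}\succeq 0$ is the same as $M^{-1}\preceq 0$. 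This yields $(ii_1)\Leftrightarrow(ii_2)\Leftrightarrow(ii_3)$.

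The points that need care are the strict-positivity bookkeeping --- deducing $q\succ 0$ rather than merely $q\succeq 0$ in $(i_3)\Rightarrow(i_2)$, and $w^{\T}q>0$ in $(i_2)\Rightarrow(i_1)$ --- since this is precisely where reducibility of $M$ would otherwise bite, together with the elementary but easy-to-botch identity $s(M)=\rho(M+\alpha I)-\alpha$, valid once $\alpha$ is chosen so that $M+\alpha I$ is nonnegative, which in turn rests on $s(N)=\rho(N)$ for every nonnegative $N$. Since the statement is quoted from \cite{Berman:94}, one could instead simply cite it; the outline above reconstructs the standard proof.
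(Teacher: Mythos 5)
The paper gives no proof of this lemma at all --- it is quoted verbatim from Berman and Plemmons with only the citation --- so there is nothing to compare against except the standard literature argument, which is exactly what you have reconstructed. Your cycle $(i_1)\Rightarrow(i_3)\Rightarrow(i_2)\Rightarrow(i_1)$ via the Neumann series, the left Perron eigenvector, and the positivity bookkeeping $q=v+Mq\succeq v\succ 0$ is correct, and the reduction of (ii) to (i) by the shift $N=M+\alpha I$ with $s(N)=\rho(N)$ and $(I-\tfrac{1}{\alpha}N)^{-1}=-\alpha M^{-1}$ is the standard and correct route; I see no gaps.
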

\begin{lemma}\label{lem2} (\cite{Shen:15,Pathirana:18}) Assume that $A$ is a Metzler matrix,
$B$, $C$, $D$ are nonnegative, $D$ is a Schur matrix. Then, \\

(i) System \eqref{1} is positive.

(ii) For $\psi_1(t_0)\preceq \psi_2(t_0)$ and $\phi_1(s)\preceq
\phi_2(s), s \in [t_0-\tau_M,t_0)$, we have
\begin{align}
x(t,t_0,\psi_1,\phi_1,\omega)&\preceq x(t,t_0,\psi_2,\phi_2,\omega),\ \forall t \geq t_0, \label{7}\\
y(t,t_0,\psi_1,\phi_1,d)&\preceq y(t,t_0,\psi_2,\phi_2,d),\  \forall t \geq t_0. \label{8}
\end{align}
\end{lemma}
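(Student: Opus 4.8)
The plan is to establish part (i) directly and then deduce part (ii) from it by exploiting the linearity of \eqref{1}. The substance is therefore part (i), and the method I would use is the classical method of steps combined with a Picard-type iteration carried out \emph{inside} the closed cone of componentwise nonnegative functions. The point requiring care is that the differential equation and the difference equation are coupled — $x(t)$ enters the right-hand side of the $y$-equation while $y(t-h_1(t))$ enters the right-hand side of the $x$-equation — so one cannot solve one and then the other; the iteration must be run on the pair $(x,y)$ at once.

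For part (i), fix $T>t_0$ and work on $[t_0,T]$. By the variation-of-constants formula, the $x$-component of any solution of \eqref{1} satisfies
\[ x(t)=e^{A(t-t_0)}\psi(t_0)+\int_{t_0}^{t}e^{A(t-s)}\bigl(B\,y(s-h_1(s))+\omega(s)\bigr)\,ds,\qquad t\in[t_0,T]. \]
Since $A$ is Metzler, $A+\alpha I\succeq 0$ for $\alpha$ large enough, hence $e^{A\tau}=e^{-\alpha\tau}e^{(A+\alpha I)\tau}\succeq 0$ for all $\tau\geq 0$; combined with $B\succeq 0$, $\omega(\cdot)\succeq 0$ and $\psi(t_0)\succeq 0$, this shows the right-hand side maps a nonnegative $y$ to a nonnegative $x$. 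For the difference equation, $D$ Schur gives $\rho(D)<1$, hence $(I-D)^{-1}\succeq 0$ by Lemma \ref{lem1}$(i_3)$; iterating
\[ y(t)=Cx(t)+Dy(t-h_2(t))+d(t),\qquad y(s)=\phi(s)\ \text{on}\ [t_0-h_M,t_0), \]
expresses $y(t)$ through powers of the nonnegative matrix $D$ acting on the nonnegative data $Cx$, $d$, $\phi$, so this step sends a nonnegative pair $(x,y)$ to a nonnegative $y$. Now set $(x_0,y_0)\equiv 0$ on $[t_0,T]$ and equal to $(\psi,\phi)$ on the initial segment, and let $(x_{k+1},y_{k+1})$ be obtained by inserting $(x_k,y_k)$ into the two right-hand sides above. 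By the two observations and induction, every $(x_k,y_k)$ is componentwise nonnegative; the iteration converges (on $[t_0,T]$ with $T-t_0$ small, then on the whole interval via the usual weighted-norm estimate) to the unique solution $(x,y)$ of \eqref{1} on $[t_0,T]$, and since the nonnegative cone is closed the limit is nonnegative. Covering $[t_0,\infty)$ by successive intervals of this type, each time using the already-computed values as the new initial data, gives $x(t,t_0,\psi,\phi,\omega)\succeq 0$ and $y(t,t_0,\psi,\phi,d)\succeq 0$ for all $t\geq t_0$, which is (i).

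For part (ii), fix disturbances $\omega(\cdot),d(\cdot)$ and initial data with $\psi_1(t_0)\preceq\psi_2(t_0)$ and $\phi_1(s)\preceq\phi_2(s)$, and put
\[ \tilde x(t)=x(t,t_0,\psi_2,\phi_2,\omega)-x(t,t_0,\psi_1,\phi_1,\omega),\qquad \tilde y(t)=y(t,t_0,\psi_2,\phi_2,d)-y(t,t_0,\psi_1,\phi_1,d). \]
By linearity of \eqref{1}, $(\tilde x,\tilde y)$ solves the homogeneous system obtained from \eqref{1} by setting $\omega\equiv 0$ and $d\equiv 0$, with nonnegative initial data $\psi_2(t_0)-\psi_1(t_0)\succeq 0$ and $\phi_2(s)-\phi_1(s)\succeq 0$. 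By part (i) this homogeneous system is positive, so $\tilde x(t)\succeq 0$ and $\tilde y(t)\succeq 0$ for all $t\geq t_0$, which is precisely \eqref{7}--\eqref{8}.

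The main obstacle is the apparently circular coupling between the two equations; it is resolved by iterating on the pair $(x,y)$ and verifying that the composed map leaves the nonnegative cone invariant, the two enabling facts being the nonnegativity of $e^{A\tau}$ (from $A$ Metzler) and the nonnegativity of the resolved difference operator (from $D$ Schur, equivalently $(I-D)^{-1}\succeq 0$, which is exactly what is needed in the borderline case $h_2(t)=0$). Existence and uniqueness of solutions of \eqref{1} on $[t_0,\infty)$ is standard for coupled differential--difference equations with a Schur difference operator and I would invoke it rather than reprove it.
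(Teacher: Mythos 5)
Your argument is correct, but note that the paper does not prove Lemma \ref{lem2} at all: it is imported verbatim from \cite{Shen:15,Pathirana:18}, so you have supplied a self-contained proof for what the text treats as a citation. Your route is the standard one in the positive-systems literature and matches in spirit what the cited references do: part (i) by showing that the solution map leaves the nonnegative cone invariant --- via $e^{A\tau}\succeq 0$ for Metzler $A$ in the variation-of-constants formula, and via nonnegativity of the resolved difference operator ($(I-D)^{-1}\succeq 0$ from Lemma \ref{lem1}) for the $y$-equation --- combined with a Picard/method-of-steps iteration whose iterates all lie in the closed cone; part (ii) by observing that the difference of two solutions with the same disturbances and the same delays solves the homogeneous version of \eqref{1} with nonnegative initial data, so \eqref{7}--\eqref{8} follow from (i). Both steps are sound. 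The one point you rightly flag but do not fully discharge is the well-posedness of the difference equation when $h_2(t)$ can vanish or approach zero, where the backward iteration $t\mapsto t-h_2(t)$ need not reach the initial segment in finitely many steps; there the Schur property of $D$ is what makes the infinite nonnegative series converge, consistent with the paper's own case distinction in \eqref{50}. Deferring that to the standard existence--uniqueness theory for CDDEs is reasonable and is no less rigorous than the paper, which defers the entire lemma.
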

\begin{lemma}\label{lem3}(\cite{Ngoc:16})
Assume that $A$ is a Metzler matrix,
$B$, $C$, $D$ are nonnegative,  Then, the following statements are equivalent\\
(i)\ \ $\rho(D)<1$ and $s(A+B(I-D)^{-1}C)<0$;\\
(ii) there exist $p \in \mathbb{R}_+^n$, $q \in \mathbb{R}_+^m$   such that
\begin{align}
&Ap+Bq\prec 0, \label{9}\\
&Cp+(D-I)q\prec 0; \label{10}
\end{align}
(iii) $s(A)<0$ and $\rho(C(-A)^{-1}B+D)<1$.
\end{lemma}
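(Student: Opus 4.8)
\medskip
\noindent\textbf{Proof sketch.}
The plan is to prove the cycle of equivalences by establishing $(ii)\Leftrightarrow(i)$ and $(ii)\Leftrightarrow(iii)$, with Lemma~\ref{lem1} doing the heavy lifting at every step. The first thing to record is that, under the standing sign hypotheses, $A+B(I-D)^{-1}C$ and $A$ are Metzler matrices (a Metzler matrix plus a nonnegative matrix is Metzler), while $C(-A)^{-1}B+D$ is a nonnegative matrix as soon as $(-A)^{-1}\succeq 0$; so the Schur/Hurwitz characterizations of Lemma~\ref{lem1} apply to exactly the objects appearing in $(i)$ and $(iii)$.

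For $(ii)\Rightarrow(i)$ I would argue as follows. Since $Cp\succeq 0$, inequality \eqref{10} gives $(D-I)q\prec 0$, so Lemma~\ref{lem1}$(i_2)$ yields $\rho(D)<1$ and then $(I-D)^{-1}\succeq 0$ by $(i_3)$. Rewriting \eqref{10} as $(I-D)q-Cp\succ 0$ and multiplying by the nonnegative matrix $(I-D)^{-1}$ gives $q\succeq(I-D)^{-1}Cp$; substituting into \eqref{9} and using $B\succeq 0$ produces $\bigl(A+B(I-D)^{-1}C\bigr)p\prec 0$ with $p\in\mathbb{R}_+^n$, whence $s\bigl(A+B(I-D)^{-1}C\bigr)<0$ by Lemma~\ref{lem1}$(ii_2)$. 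Conversely, for $(i)\Rightarrow(ii)$, Lemma~\ref{lem1}$(ii_2)$ supplies $p\in\mathbb{R}_+^n$ with $\bigl(A+B(I-D)^{-1}C\bigr)p\prec 0$ and Lemma~\ref{lem1}$(i_2)$ supplies $v\in\mathbb{R}_+^m$ with $(D-I)v\prec 0$; I would then set $q=(I-D)^{-1}Cp+\varepsilon v$ and verify that for all sufficiently small $\varepsilon>0$ one has $q\in\mathbb{R}_+^m$ and both \eqref{9} and \eqref{10} hold (the identity $(D-I)(I-D)^{-1}=-I$ makes \eqref{10} collapse to $\varepsilon(D-I)v\prec 0$, while the $\varepsilon v$-correction both lifts $q$ into the open orthant and absorbs the non-strictness coming from the matrix multiplication).

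The equivalence $(ii)\Leftrightarrow(iii)$ is handled by the mirror-image argument with the roles of the two equations interchanged. For $(ii)\Rightarrow(iii)$: from \eqref{9} and $Bq\succeq 0$ we get $Ap\prec 0$, so $s(A)<0$ and $(-A)^{-1}\succeq 0$ by Lemma~\ref{lem1}$(ii_2)$--$(ii_3)$; then \eqref{9} gives $p\succeq(-A)^{-1}Bq$, which fed into \eqref{10} (using $C\succeq 0$) yields $\bigl(C(-A)^{-1}B+D-I\bigr)q\prec 0$ with $q\in\mathbb{R}_+^m$, hence $\rho\bigl(C(-A)^{-1}B+D\bigr)<1$ by Lemma~\ref{lem1}$(i_2)$. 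For $(iii)\Rightarrow(ii)$, Lemma~\ref{lem1} provides $q\in\mathbb{R}_+^m$ with $\bigl(C(-A)^{-1}B+D-I\bigr)q\prec 0$ and $w\in\mathbb{R}_+^n$ with $Aw\prec 0$; then $p=(-A)^{-1}Bq+\varepsilon w$ works for small $\varepsilon>0$, the verification of \eqref{9} being the one-line cancellation $Ap+Bq=\varepsilon Aw\prec 0$.

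I expect the only genuine obstacle to be the bookkeeping around strictness: multiplying a strict inequality by a nonnegative (even invertible) matrix returns only a non-strict inequality, yet condition $(ii)$ demands certificates $p,q$ in the \emph{open} orthants together with strict inequalities. Each implication therefore needs the small-$\varepsilon$ perturbation along a ``slack direction'' ($v$ with $(D-I)v\prec 0$, or $w$ with $Aw\prec 0$), whose existence is itself a consequence of Lemma~\ref{lem1}; checking that a single $\varepsilon$ can be chosen small enough to keep all finitely many scalar inequalities strict is then routine. The remaining point of care is to confirm, before each application of Lemma~\ref{lem1}, that the relevant matrix is genuinely Metzler or genuinely nonnegative, which is where $\rho(D)<1$ (giving $(I-D)^{-1}\succeq 0$) and $s(A)<0$ (giving $(-A)^{-1}\succeq 0$) are used.
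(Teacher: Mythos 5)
The paper does not prove Lemma~\ref{lem3}; it is quoted from \cite{Ngoc:16} without argument, so there is no in-paper proof to compare against. Your proposal is a correct and essentially complete proof. The two directions $(ii)\Rightarrow(i)$ and $(ii)\Rightarrow(iii)$ are exactly the manipulations the paper itself performs in Remark~\ref{rem2} (left-multiplying \eqref{9} by $A^{-1}$ and \eqref{10} by $(I-D)^{-1}$), pushed one step further by substituting the resulting bounds back into the other inequality and invoking Lemma~\ref{lem1}$(ii_2)$ resp.\ $(i_2)$ for the Metzler matrix $A+B(I-D)^{-1}C$ and the nonnegative matrix $C(-A)^{-1}B+D$. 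The converse directions via $q=(I-D)^{-1}Cp+\varepsilon v$ and $p=(-A)^{-1}Bq+\varepsilon w$ are clean; the cancellations $(D-I)(I-D)^{-1}=-I$ and $A(-A)^{-1}=-I$ make \eqref{10} and \eqref{9} hold exactly up to the $\varepsilon$-term, and the remaining inequality holds for small $\varepsilon$ by continuity. One phrasing quibble: in $(i)\Rightarrow(ii)$ the $\varepsilon v$ term does not ``absorb non-strictness'' in \eqref{9} --- there is nothing non-strict to absorb there, since $Ap+Bq=(A+B(I-D)^{-1}C)p+\varepsilon Bv$ holds with equality; rather, the strict negativity of the first summand provides the margin that tolerates the nonnegative perturbation $\varepsilon Bv$. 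This does not affect correctness.
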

\begin{remark}\label{rem1} \rm Two matrix inequalities \eqref{9}, \eqref{10} can be reformulated into the compact form\\ $
\left[\begin{matrix} A & B\\C & D-I\end{matrix}\right]
\left[\begin{matrix} p\\q\end{matrix}\right]\prec 0$.
By (ii) of Lemma \ref{lem1}, we have $\left[\begin{matrix} A & B\\C & D-I\end{matrix}\right]^{-1}\preceq 0$. Since matrix $\left[\begin{matrix} A & B\\C & D-I\end{matrix}\right]^{-1}$ is non-singular and  all its row vectors  are non-zero. Hence, the vector $[p^T\ \ q^T]^T$ can be computed by
the following equation
\begin{align}\label{11}
\left[\begin{matrix} p\\q\end{matrix}\right]=-\left[\begin{matrix} A & B\\C & D-I\end{matrix}\right]^{-1}\xi,
\end{align}
where $\xi \in \mathbb{R}^{n+m}_{+}$.
\end{remark}
\begin{remark}\label{rem2} \rm The authors \cite{Shen:15,Ngoc:16} used inequalities \eqref{9}, \eqref{10} in order to derive asymptotic stability conditions for CDDEs \eqref{1}. By using  the completeness of the Euclidean space $\mathbb{R}^n$, the authors \cite{Pathirana:18} proposed  tighter inequalities, \eqref{15}, \eqref{16} and \eqref{17}, and used these inequalities to analyze stability of CDDEs with unbounded time-delays. In the paper, we also use these tighter inequalities to derive our main results (componentwise upper bounds of system \eqref{1}). Hence, in the following, we re-state these inequalities and recall its proof.  Assume that condition (i) of Lemma \ref{lem3} holds. Then, by (iii) of Lemma \ref{lem3}, we have $s(A)<0$ which implies that $A^{-1}\preceq 0$ due to (ii) of Lemma \ref{lem1}. Left multiplying $A^{-1}$ on inequality \eqref{9}, we obtain
\begin{align}
-A^{-1}Bq\prec p. \label{12}
\end{align}
Since $D$ is a Schur matrix and nonnegative,  by (i) of Lemma \ref{lem1}, matrix $(I-D)^{-1}$ is nonnegative. Left multiplying $(I-D)^{-1}$ on inequality \eqref{10}, we obtain
\begin{align}
(I-D)^{-1}Cp\prec q. \label{13}
\end{align}
Matrix inequality \eqref{10} is also rewritten as
\begin{align}
Cp+Dq\prec q. \label{14}
\end{align}
Since \eqref{12}, \eqref{13} and \eqref{14} are strict inequalities, by using the completeness of the Euclidean space $\mathbb{R}^n$, there is a positive scalar $\mu\in (0,1)$ such that
\begin{align}
-A^{-1}Bq &\preceq (1-\mu)p\prec p, \label{15}\\
(I-D)^{-1}Cp&\preceq (1-\mu)q\prec q, \label{16}\\
Cp+Dq&\preceq (1-\mu)q \prec q. \label{17}
\end{align}
These tighter inequalities \eqref{15}, \eqref{16} and \eqref{17} will be key estimates for deriving main results in next section.
\end{remark}
\section{Main results}
\subsection{Finite-time convergence of linear positive systems}
In this subsection, we present a method to find the smallest possible time which guarantees the finite-time convergence (i.e., all the state vectors starting from a given bounded set converges within another given bounded set after a finite time) of linear positive systems.
The result is needed in establishing a componentwise upper bound of CDDEs with/without bounded disturbances  in the Sections 3.2 and 3.3. Consider the following linear positive system
\begin{align}\label{18}
\begin{split}
&\dot{u}(t)=Au(t), \ t\geq t_0\geq 0,\\
&u(t_0)=\theta,
\end{split}
\end{align}
where $u(t)\in \mathbb{R}^n_{0,+}$ is the state vector; $A\in \mathbb{R}^{n\times n}$ is a Metzler matrix; the initial value $\theta$ is unknown but  assumed to be bounded by a known bound $\overline{\theta}$, i.e.,
$0\preceq \theta\preceq \overline{\theta}$. Let us denote by $u(t,t_0,\theta)$
 the solution   of system \eqref{18}.

The object of this subsection is,  for a given nonnegative vector $\delta=[\delta_1\ \cdots\  \delta_n]^T \in \mathbb{R}^n_{0,+}$,  to find as small as possible time $T\geq 0$ such that
$u(t,t_0,\theta)\preceq \delta,\ \forall t \geq t_0+T.$\\

\noindent\textit{A.1. Exponential componentwise estimate of linear positive system}

\begin{lemma}\label{lem4}  Assume that $A$ is a Hurwitz stable. Then,
 there are a positive scalar $\alpha>0$ and a vector-value function $\beta(\overline{\theta})$ such that the following exponential componentwise  estimate holds:
\begin{align}\label{19}
u(t,t_0,\theta)\preceq \beta(\overline{\theta})e^{-\alpha (t-t_0)}, \ \forall t\geq t_0.
\end{align}
\end{lemma}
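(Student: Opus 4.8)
The plan is to use the Metzler--Hurwitz structure of $A$ to produce a strictly positive vector $p$ with $Ap\prec 0$, and then to convert that strict inequality into a genuine exponential decay rate. Concretely, since $A$ is Metzler and Hurwitz stable, part (ii) of Lemma~\ref{lem1} yields a vector $p\in\mathbb{R}^n_+$ with $Ap\prec 0$. Because $p\succ 0$ and the inequality is strict, the scalar $\alpha:=\min_{i\in\overline{1,n}}\big(-(Ap)_i/p_i\big)$ is positive and satisfies $(A+\alpha I)p\preceq 0$; moreover $A+\alpha I$ is again a Metzler matrix. This $\alpha$, which depends only on $A$, will be the exponent appearing in \eqref{19}.

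Next I would rescale the trajectory. Set $v(t):=e^{\alpha(t-t_0)}u(t,t_0,\theta)$; differentiating and using \eqref{18} gives $\dot v(t)=(A+\alpha I)v(t)$ with $v(t_0)=\theta$. Choose the scalar $\lambda:=\max_{i\in\overline{1,n}}\big(\overline{\theta}_i/p_i\big)$, so that $\theta\preceq\overline{\theta}\preceq\lambda p$, and put $z(t):=\lambda p-v(t)$. Then $z(t_0)=\lambda p-\theta\succeq 0$, and, since $(A+\alpha I)(\lambda p)\preceq 0$,
\begin{align*}
\dot z(t)=-(A+\alpha I)v(t)=(A+\alpha I)z(t)-(A+\alpha I)(\lambda p)\succeq (A+\alpha I)z(t).
\end{align*}

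It then remains to conclude that $z(t)\succeq 0$ for all $t\ge t_0$, from which $v(t)\preceq\lambda p$, i.e. $u(t,t_0,\theta)\preceq(\lambda p)e^{-\alpha(t-t_0)}$, follows at once; this is exactly \eqref{19} with $\beta(\overline{\theta}):=\lambda p=\big(\max_{i\in\overline{1,n}}\overline{\theta}_i/p_i\big)p$. For the positivity of $z$ I would invoke the fact that, $M:=A+\alpha I$ being Metzler, $e^{M\tau}$ is entrywise nonnegative for every $\tau\ge 0$ (the same property that makes \eqref{18} positive); writing $g(t):=\dot z(t)-Mz(t)\succeq 0$ and applying variation of constants, $z(t)=e^{M(t-t_0)}z(t_0)+\int_{t_0}^{t}e^{M(t-s)}g(s)\,ds$, both summands are nonnegative, so $z(t)\succeq 0$. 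The only points that deserve care — rather than being a real obstacle — are extracting the uniform rate $\alpha$ from the strict inequality $Ap\prec 0$ and this sign-preserving comparison for the Metzler differential inequality; both are standard, so I expect the argument to go through cleanly.
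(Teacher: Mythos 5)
Your proof is correct, but it takes a genuinely different route from the paper's. The paper works on the dual side: it picks $\alpha$ with $s(A+\alpha I)<0$, obtains a \emph{left} positive vector $v$ with $v^{T}(A+\alpha I)\prec 0$ from Lemma~\ref{lem1}(ii) applied to $A^{T}+\alpha I$, and uses the linear copositive Lyapunov function $V(t)=v^{T}e^{\alpha t}u(t)$; the componentwise bound falls out of $v_{i}u_{i}(t)\le v^{T}u(t)$, giving $\beta_{i}=v^{T}\overline{\theta}/v_{i}$. You instead take a \emph{right} positive vector $p$ with $Ap\prec 0$, extract the rate $\alpha=\min_{i}\bigl(-(Ap)_{i}/p_{i}\bigr)$ from it, and run a trajectory comparison $e^{\alpha(t-t_{0})}u(t)\preceq\lambda p$ via the entrywise nonnegativity of $e^{(A+\alpha I)\tau}$, giving $\beta=\bigl(\max_{j}\overline{\theta}_{j}/p_{j}\bigr)p$. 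Both arguments are sound and both use the Metzler structure of $A$ (stated for system~\eqref{18}), not just Hurwitz stability; your semigroup-positivity comparison is self-contained where the paper leans on positivity of $u(t)$ to sign $\dot V$. One practical difference worth noting: in the paper $\alpha$ is a free parameter ranging over $(0,\alpha_{\max}]$, which is what the subsequent optimization in subsections A.2--A.3 exploits, whereas your construction ties $\alpha$ to the particular $p$; to recover an arbitrary admissible decay rate you would simply apply Lemma~\ref{lem1}(ii) to $A+\alpha I$ for the prescribed $\alpha$ rather than to $A$, so nothing is lost, but as written your $\alpha$ is determined rather than chosen. The resulting envelopes $v^{T}\overline{\theta}/v_{i}$ and $(\max_{j}\overline{\theta}_{j}/p_{j})p_{i}$ are in general different constants; either one establishes \eqref{19}.
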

\begin{proof} Since $A$ is Hurwitz stable, there exists a positive scalar $\alpha>0$ such that $A+\alpha I$ is  Hurwitz stable. Note that $s(A^T+\alpha I)=s(A+\alpha I)$. Therefore, matrix $A^T+\alpha I$ is also Hurwitz stable. By (ii) of Lemma \ref{lem1}, there is a vector $v\succ 0$ such that
\begin{align}\label{20}
v^T(A+\alpha I)\prec 0.
\end{align}
 Let us consider the following Lyapunov functional
\begin{align}\label{21}
V(t)=v^Te^{\alpha t}u(t).
\end{align}
By a simple computation, the derivative of $V$ along the solution $u(t,t_0,\theta)$ is given as below
\begin{align}\label{22}
\dot{V}(t)=v^T(A+\alpha I)e^{\alpha t}u(t,t_0,\theta) \leq 0, \ \forall t\geq t_0,
\end{align}
which follows that $V(t)\leq V(t_0),\ \forall t\geq t_0$.
 Combining with $v\succ 0$,  $u(t,t_0,\theta)$ $\succeq 0,\ \forall t \geq t_0$ and $u(t_0,t_0,\theta)=\theta\preceq \overline{\theta}$,
 we have, for each $i\in \overline{1,n}$, that
 \begin{align}\label{23}
v_iu_i(t,t_0,\theta)e^{\alpha t}&\leq v^Tu(t,t_0,\theta)e^{\alpha t}\notag\\
& \leq v^Tu(t_0,t_0,\theta)e^{\alpha t_0}\notag\\
&\leq v^Te^{\alpha t_0}\overline{\theta}, \ \ \forall t\geq t_0,
\end{align}
which follows that
\begin{align}\label{24}
u_i(t,t_0,\theta)\leq \frac{v^T\overline{\theta}}{v_i}e^{-\alpha (t-t_0)}, \ \forall t\geq t_0.
\end{align}
Set $\beta_i(v,\overline{\theta})=\frac{v^T\overline{\theta}}{v_i}$ and $\beta(v,\overline{\theta})=[\beta_1(v,\overline{\theta}),\cdots,
\beta_n(v,\overline{\theta})]^T$. Then, from \eqref{24}, we obtain a exponential componentwise estimate \eqref{19}. The proof of Lemma \ref{lem4} is completed.
\end{proof}
\begin{remark} \label{rem3} \rm For a fixed decay rate $\alpha$ satisfying $s(A^T+\alpha I)<0$. Let us denote by $\Omega$
the set of all vectors $v\succ 0$ such that inequality \eqref{20} holds, i.e.,
\begin{align}\label{25}
\Omega =\{v \in \mathbb{R}_{+}^n: (A^T+\alpha I)v\prec 0\}.
\end{align}
Then, by taking the minimum of  the vector-value function $\beta(v,\overline{\theta})$ subject to $v\in \Omega$, i.e.,
\begin{align}\label{26}
\min_{v\in \Omega}\beta(v,\overline{\theta})= [\min_{v\in \Omega}\beta_1(v,\overline{\theta}), \cdots, \min_{v\in \Omega}\beta_n(v,\overline{\theta})]^T,
\end{align}
we obtain the smallest exponential componentwise  estimate with the fixed decay rate $\alpha$ of system \eqref{18} as below
\begin{align}\label{27}
u(t,t_0,\theta)\preceq \Big(\min_{v\in \Omega}\beta(v,\overline{\theta})\Big)e^{-\alpha (t-t_0)}, \ \forall t\geq t_0.
\end{align}
Next, we present a method to  find $\min_{v\in \Omega}\beta_i(v,\overline{\theta})$, $i\in \overline{1,n}$. For simplicity, we consider the case where $i=1$.
\end{remark}
\noindent\textit{A.2. Minimization of  partial factor $\beta_1(v,\overline{\theta})$}

Let us denote
\begin{align}\label{28}
\Lambda =\{-(A^T+\alpha I)^{-1}r: r\in \ \mathbb{R}_{+}^n\}.
\end{align}
Then, from Lemma \ref{lem1}, we can verify that
\begin{align}\label{29}
\Omega =\Lambda,
\end{align}
which means that every vector $v\in \Omega$ has the following form
\begin{align}\label{30}
v=-(A^T+\alpha I)^{-1}r,
\end{align}
where $r\in \ \mathbb{R}_{+}^n$. By substituting \eqref{30} into formula
$\beta_1(v,\overline{\theta})=\frac{v^T\overline{\theta}}{v_1}$
with some algebraic manipulations,
 $\beta_1(v,\overline{\theta}))$ is simplified into the following rational function of a vector variable $r$, which is denoted by $\Gamma_1(r)$,
\begin{align}\label{31}
\beta_1(v,\overline{\theta})=
\frac{a_1r_1+a_2r_2+\cdots+a_nr_n}{b_1r_1+b_2r_2+\cdots+b_nr_n}\triangleq \Gamma_1(r).
\end{align}
Hence, the problem of finding $\min_{v\in \Omega}\beta_1(v,\overline{\theta})$
 is equivalent to  the problem of finding $\min_{r\in \ \mathbb{R}_{+}^n}\Gamma_1(r)$. Because $\mathbb{R}^n_+$ is an open set in $\mathbb{R}^n$, the minimum of function $\Gamma_1(r)$  subject to $r\in \mathbb{R}^n_+$ may not exist. Therefore, instead of finding
the minimum, we find the infimum of function $\Gamma_1(r)$  subject to $r\in \mathbb{R}^n_+$.

Note that, by Lemma \ref{lem1}, matrix $-(A^T+\alpha I)^{-1}$
is non-negative and nonsingular. This follows that all row vectors of
matrix $-(A^T+\alpha I)^{-1}$
are non-negative and non-zero. Combining with the non-negativeness of vector
$\overline{\theta}$,  we can verify that
 vector $a=[a_1\ a_2 \ \cdots \ a_n]^T$ is non-negative and that vector $b=[b_1\ b_2 \ \cdots \ b_n]^T$ is non-negative and non-zero.
Set $J=\{j\in \overline{1,n}: b_j>0\}$, then by (Lemma 4 in Nam IEEE AC, 2016), we have
\begin{align}\label{32}
\inf_{r\in \mathbb{R}_{+}^{n}}\Gamma_1(r)=\min_{j\in
J}\frac{a_j}{b_j}\triangleq \gamma_1.
\end{align}
Thus,  the smallest exponential estimate with a fixed decay rate $\alpha$ of the $1^{th}$ partial state vector can be given as below:
\begin{align}\label{33}
u_1(t,t_0,\theta)\leq \gamma_1e^{-\alpha (t-t_0)}, \ \forall t\geq t_0,
\end{align}
where $\gamma_1$ is computed by formula \eqref{32}.\\

\noindent\textit{A.3. Finite-time convergence}

For a given scalar $\delta_1>0$, set
 \begin{align}\label{34}
t^1_{\alpha}=\begin{cases} 0 & if \ \  \gamma_1\leq \delta_1,\\
-\frac{1}{\alpha}\ln \frac{\delta_1}{\gamma_1} \ \  & if \ \ \gamma_1>\delta_1.
\end{cases}
\end{align}
From formula \eqref{33} with a simple computation,  we can verify that
\begin{align}\label{35}
u_1(t,t_0,\theta)\leq \delta_1, \ \forall t\geq t_0+t^1_{\alpha}.
\end{align}
Note that $s(A^T+\alpha I)$ is an increasing function with respect to variable $\alpha$. By using the one-dimensional search method, we find the suppremum $\alpha_{max}$ of scalars $\alpha>0$ such that $s(A^T+\alpha I)<0$. Hence, by increasing $\alpha$ gradually  from $0$ to $\alpha_{max}$ with a chosen small step, for example 0.001, and comparing the times $t_{\alpha}^1$ computed by \eqref{34}, we  find
 \begin{align}\label{36}
T^1=\min_{\alpha\in (0,\alpha_{max}]}t_{\alpha}^1.
\end{align}
Then, $T^1$ is the smallest time which guarantees that
\begin{align}\label{37}
u_1(t,t_0,\theta)\leq \delta_1, \ \forall t\geq t_0+T^1.
\end{align}
Similarly, for given scalars $\delta_i>0, i=2,\cdots,n$, we also
compute the smallest times  $T^i,\ i=2,\cdots,n$ such that
$
u_i(t,t_0,\theta)\leq \delta_i, \ \forall t\geq t_0+T^i.
$
Set
\begin{align}\label{38}
T=\max\{T^1,T^2,\cdots, T^n\}.
\end{align}
Then, $T$ is the smallest time, which guarantees that the state vector $u(t,t_0,\theta)$ converges componentwisely within the ball $\mathbb{B}(0,\delta)$ after the finite-time $T$, i.e.,
 \begin{align}\label{39}
u(t,t_0,\theta)\preceq \delta, \ \forall t\geq t_0+T.
\end{align}
 We have now summarized the above presented statements into the following theorem.
\begin{theorem}\label{thm1}Assume that $A$ is a Metzler matrix and Hurwitz stable. Given two vectors $\overline{\theta}\in\mathbb{R}^n_{0,+}$, $\delta\in\mathbb{R}^n_{0,+}$. Then, all trajectories of system \eqref{18} converge componentwisely within the ball $\mathbb{B}(0,\delta)$ after the finite-time, $T$, computed by formula \eqref{38}.
\end{theorem}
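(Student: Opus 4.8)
The plan is to collect the componentwise exponential estimates prepared in Sections A.1--A.3 and carry out the optimizations in the correct order; the proof only has to organize these pieces.

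Step one is Lemma~\ref{lem4}: since $A$ is Metzler and Hurwitz stable, for every decay rate $\alpha$ with $s(A^T+\alpha I)<0$ and every $v\in\Omega$ one has $u(t,t_0,\theta)\preceq\beta(v,\overline{\theta})\,e^{-\alpha(t-t_0)}$ for all $t\ge t_0$. Because the left-hand side does not depend on $v$, I may pass to the componentwise infimum over $v\in\Omega$ on the right-hand side. Using the parametrization $\Omega=\Lambda$ of \eqref{29} --- which follows from Lemma~\ref{lem1}(ii) applied to the Metzler Hurwitz matrix $A^T+\alpha I$, so that $-(A^T+\alpha I)^{-1}$ is nonnegative, nonsingular, and hence has no zero row --- each $\min_{v\in\Omega}\beta_i(v,\overline{\theta})$ becomes $\inf_{r\in\mathbb{R}^n_+}\Gamma_i(r)$, and this infimum equals $\gamma_i=\gamma_i(\alpha)=\min_{j\in J}a_j/b_j$ by the cited lemma of Nam (2016), cf.\ \eqref{32}. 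This yields \eqref{33}: $u_i(t,t_0,\theta)\le\gamma_i(\alpha)\,e^{-\alpha(t-t_0)}$ for every admissible $\alpha$, every $i\in\overline{1,n}$, and all $t\ge t_0$.

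Step two turns each estimate into a convergence time. For fixed $i$ and fixed $\alpha\in(0,\alpha_{max}]$, \eqref{33} forces $u_i(t,t_0,\theta)\le\delta_i$ once $e^{-\alpha(t-t_0)}\le\delta_i/\gamma_i$, i.e.\ for $t\ge t_0+t^i_\alpha$ with $t^i_\alpha$ given by the case split \eqref{34} (with $t^i_\alpha=0$ and the bound valid from $t_0$ on when $\gamma_i\le\delta_i$). Minimizing over the admissible decay rates, $T^i=\min_{\alpha\in(0,\alpha_{max}]}t^i_\alpha$ --- the admissible set being located from the monotonicity of $\alpha\mapsto s(A^T+\alpha I)$ together with a one-dimensional search for $\alpha_{max}$ --- still yields $u_i(t,t_0,\theta)\le\delta_i$ for $t\ge t_0+T^i$, since $T^i=t^i_{\alpha^\ast}$ for some optimizing $\alpha^\ast$ and the estimate at that single $\alpha^\ast$ already suffices. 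Finally, with $T=\max\{T^1,\dots,T^n\}$ as in \eqref{38}, any $t\ge t_0+T$ satisfies $t\ge t_0+T^i$ for all $i$, so $u_i(t,t_0,\theta)\le\delta_i$ holds simultaneously for all $i$; together with $u(t,t_0,\theta)\succeq 0$ (system \eqref{18} is positive because $A$ is Metzler), this is precisely $u(t,t_0,\theta)\in\mathbb{B}(0,\delta)$ for all $t\ge t_0+T$.

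The only point that deserves an explicit word is that the infimum of $\Gamma_i$ over the open set $\mathbb{R}^n_+$ need not be attained; this is harmless here precisely because $u_i(t,t_0,\theta)$ is dominated by $\beta_i(v,\overline{\theta})\,e^{-\alpha(t-t_0)}$ for every $v$, so the bound with the (possibly non-attained) constant $\gamma_i$ is nonetheless valid. Beyond that I anticipate no genuine obstacle: the remaining work is bookkeeping, namely verifying that the order ``fix $\alpha$, optimize over $v$ component by component, then optimize over $\alpha$ component by component, then take the maximum over components'' is the one producing the smallest provably correct $T$ --- which is exactly the order encoded in \eqref{32}--\eqref{38}.
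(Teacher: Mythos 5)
Your proposal is correct and follows essentially the same route as the paper: it assembles Lemma~\ref{lem4}, the parametrization $\Omega=\Lambda$ with the reduction of $\min_{v\in\Omega}\beta_i$ to $\inf_{r\in\mathbb{R}^n_+}\Gamma_i(r)=\gamma_i$, and the conversion of the exponential estimates into the times $t^i_\alpha$, $T^i$, and $T$ of \eqref{34}--\eqref{38}. Your explicit observation that the non-attainment of the infimum over the open set $\mathbb{R}^n_+$ is harmless (because the estimate holds for every $v\in\Omega$, hence for the infimal constant) matches the paper's own remark and closes the only delicate point.
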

\subsection{Componentwise bounds for positive CDDEs without disturbances }
  In this subsection, we present a new result on componentwise bound of  system \eqref{1} for the case no disturbance,  i.e.,
  $\omega(t)\equiv d(t) \equiv 0$. For simplicity, we consider system \eqref{1} with $t_0=0$.
\begin{theorem}\label{thm2}  Assume that $A$ is a Metzler matrix,
$B$, $C$, $D$ are nonnegative, $D$ is a Schur matrix and
$s(A+B(I-D)^{-1}C)<0$. Then, there exist two positive vectors $p \in \mathbb{R}^n_{+}$, $q \in \mathbb{R}^m_{+}$, a scalar $\mu\in (0,1)$, a time $T^*\geq h_M$,  such that, for $k=0,1,2,\cdots,$  the following estimates hold:
 \begin{align}\label{40}
 \begin{split}
x(t,0,\psi,\phi,0)&\preceq  (1-\mu)^kp,\ \  \forall t\in[kT^*,(k+1)T^*), \\
 y(t,0,\psi,\phi,0)&\preceq  (1-\mu)^{k+1}q, \ \  \forall t\in[kT^*,(k+1)T^*).
 \end{split}
\end{align}
\end{theorem}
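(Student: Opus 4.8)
The plan is to produce the quadruple $(p,q,\mu,T^*)$ and then prove \eqref{40} by induction on $k$, the base case $k=0$ saying that the trajectory stays inside a fixed box $\{x\preceq p\}\times\{y\preceq q\}$ and each inductive step shrinking this box by the factor $1-\mu$ over a time window of length $T^*$. First the ingredients: since $D$ is Schur and $s(A+B(I-D)^{-1}C)<0$, part (i) of Lemma~\ref{lem3} holds, hence (ii) yields vectors satisfying \eqref{9}--\eqref{10}; by Remark~\ref{rem2} there is $\mu\in(0,1)$ with \eqref{15}--\eqref{17}, and I would shrink $\mu$ slightly more so that these three hold with \emph{strict} inequality on the right (possible because \eqref{12}--\eqref{14} are strict). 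As \eqref{9}--\eqref{10}, and therefore \eqref{15}--\eqref{17}, are positively homogeneous in the pair, rescaling it by a sufficiently large $c>0$ gives $p\in\mathbb{R}^n_+$, $q\in\mathbb{R}^m_+$ with in addition $\overline{\psi}\preceq p$ and $\overline{\phi}\preceq q$. Note also that $s(A)<0$ by Lemma~\ref{lem3}(iii), so $A$ is Metzler and Hurwitz and $A^{-1}\preceq0$.

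For the base case, put $\omega^{*}:=-(Ap+Bq)$ and $d^{*}:=(I-D)q-Cp$; by \eqref{9}--\eqref{10} both are strictly positive. The constant pair $(x,y)\equiv(p,q)$ is exactly the solution of \eqref{1} driven by the disturbances $(\omega^{*},d^{*})$ with initial data $p$ on $\{0\}$ and $q$ on $[-h_M,0)$. Hence the difference $(p-x(t),\,q-y(t))$ solves the same CDDE \eqref{1} with the nonnegative disturbances $(\omega^{*},d^{*})$ and nonnegative initial data $p-\psi(0)\succeq0$, $q-\phi(\cdot)\succeq0$; since that system is positive by Lemma~\ref{lem2}(i), $x(t)\preceq p$ for all $t\ge0$ and $y(s)\preceq q$ for all $s\ge-h_M$. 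Feeding $x(t)\preceq p$ and $y(t-h_2(t))\preceq q$ back into the difference equation and applying \eqref{17} sharpens this to $y(t)\preceq(1-\mu)q$ for all $t\ge0$.

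Next, let $\hat v$ be the solution of $\dot{\hat v}=A\hat v+Bq$, $\hat v(0)=p$. Since $Ap+Bq\prec0$, the function $p-\hat v$ solves $\dot w=Aw+\omega^{*}$ from $w(0)=0$, so $w\succeq0$, i.e.\ $\hat v(t)\preceq p$ for all $t\ge0$; moreover $z:=\hat v+A^{-1}Bq$ solves $\dot z=Az$, with $A$ Metzler and Hurwitz, so Theorem~\ref{thm1} applied with target $\delta:=(1-\mu)p+A^{-1}Bq\succ0$ gives a finite time $T_x$ with $\hat v(t)\preceq(1-\mu)p$ for all $t\ge T_x$. Set $T^*:=\max\{h_M,T_x\}$, so $T^*\ge h_M$. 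I would then prove by induction on $k$ the two statements $P_k$: $x(t)\preceq(1-\mu)^kp$ for $t\ge kT^*$, and $Q_k$: $y(t)\preceq(1-\mu)^kq$ for $t\ge(k-1)T^*$ (read as $t\ge-h_M$ when $k=0$); $k=0$ is the previous paragraph. Assuming $P_k$ and $Q_k$: for $t\ge kT^*$ one has $t-h_1(t)\ge kT^*-h_M\ge(k-1)T^*$, so $y(t-h_1(t))\preceq(1-\mu)^kq$ by $Q_k$; comparing $x$ on $[kT^*,\infty)$ with the Metzler system forced by $B(1-\mu)^kq$ and started at $x(kT^*)\preceq(1-\mu)^kp$, linearity gives $x(t)\preceq(1-\mu)^k\hat v(t-kT^*)\preceq(1-\mu)^{k+1}p$ as soon as $t-kT^*\ge T_x$, hence for $t\ge(k+1)T^*$ --- this is $P_{k+1}$. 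Similarly, for $t\ge kT^*$ one has $t-h_2(t)\ge(k-1)T^*$, so $y(t)=Cx(t)+Dy(t-h_2(t))\preceq(1-\mu)^k(Cp+Dq)\preceq(1-\mu)^{k+1}q$ by \eqref{17} --- this is $Q_{k+1}$. Restricting $P_k$ and $Q_{k+1}$ to $t\in[kT^*,(k+1)T^*)$ is precisely \eqref{40}.

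The main obstacle is the staggered bookkeeping in the last step: the difference equation improves the bound on $y$ only on the same window where $x$ has improved, whereas the differential equation needs a whole window of length $h_M$ before its delayed term $y(t-h_1(t))$ is governed by the improved bound, and then a further time $T_x$ before the decrease in $x$ is actually realised; one therefore has to carry $P_k$ and $Q_k$ with offset ranges of validity and check that every delayed argument $t-h_i(t)$ lands in the region where the previous bound already holds, which is exactly what forces the choice $T^*\ge h_M$. The remaining ingredients --- arranging \eqref{15}--\eqref{17} to be strict so that the Theorem~\ref{thm1} target $\delta$ is strictly positive, and the elementary positivity/comparison facts for $\dot w=Aw+r$ with $A$ Metzler and $r\succeq0$ --- are routine.
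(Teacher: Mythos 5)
Your proposal is correct and follows essentially the same route as the paper: the vectors $p,q$ and the scalar $\mu$ are produced from Lemma \ref{lem3} and the tightened inequalities \eqref{15}--\eqref{17} exactly as in the paper's Step 1, the finite time comes from Theorem \ref{thm1} applied to $\dot z=Az$ with the same data $\overline{\theta}=p+A^{-1}Bq$, $\delta=(1-\mu)p+A^{-1}Bq$, and the iteration over windows of length $T^*=\max\{T_x,h_M\}$ uses linearity and comparison just as the paper's Step 3. The only (harmless, indeed slightly tidier) deviations are that you prove the invariance $x\preceq p$, $y\preceq q$ by a self-contained positivity argument on the difference $(p-x,q-y)$ rather than citing Step 1 of the proof in \cite{Pathirana:18}, you organize the induction via the staggered claims $P_k,Q_k$ instead of restarting the system at $kT^*$, and you explicitly arrange $\delta\succ 0$ so that Theorem \ref{thm1} yields a finite time --- a point the paper leaves implicit.
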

\begin{proof} \emph{Step 1:} By Lemma \ref{lem3}, Remark \ref{rem1},  there exist two vectors $\widetilde{p} \in \mathbb{R}^{n}_{+}$, $\widetilde{q}\in\mathbb{R}^{m}_{+}$  such that three inequalities \eqref{12}, \eqref{13}, \eqref{14} hold. For  given two vectors $\overline{\psi}\in \mathbb{R}^{n}_{+}$, $\overline{\phi}\in \mathbb{R}^{m}_{+}$, set $\varrho=\max\{\frac{\overline{\psi}_1}{\widetilde{p}_1},\cdots,
\frac{\overline{\psi}_n}{\widetilde{p}_n},
\frac{\overline{\phi}_1}{\widetilde{q}_1},\cdots,
\frac{\overline{\phi}_m}{\widetilde{q}_m}\}$,
and choose $p=\varrho \widetilde{p}$, $q=\varrho \widetilde{q}$. Then,  $p\succeq \overline{\psi}$, $q\succeq \overline{\phi}$ and  \eqref{12}, \eqref{13}, \eqref{14} hold. By using one-dimensional search, we find a scalar $\mu\in (0,1)$ such that inequalities  \eqref{15}, \eqref{16}, \eqref{17} hold. By (ii) of Lemma \ref{lem2}, we have
 \begin{align}\label{41}
 \begin{split}
x(t,0,\psi,\phi,0)&\preceq  x(t,0,p,q,0), \ \forall t \geq 0,\\
 y(t,0,\psi,\phi,0)&\preceq  y(t,0,p,q,0), \ \forall t \geq 0.
 \end{split}
\end{align}
\emph{Step 2:} Next, we prove that there exist a time $T>0$ such that
such that
\begin{align} \label{42}
\begin{split}
x(t,0,p,q,0)&\preceq   (1-\mu)p,\ \forall t \geq T, \\
y(t,0,p,q,0)&\preceq   (1-\mu)^2q, \ \forall t \geq T,
\end{split}
\end{align}
where $p, q$ $\mu$ are computed in Step 1. Indeed, let us consider the linear positive system
\begin{align}\label{43}
\dot{u}(t)&=  Au(t),\ \forall t \geq 0,
\end{align}
and by using Step 1 of the proof of Theorem 1 in \cite{Pathirana:18} and \eqref{15}, \eqref{16}, \eqref{17}, we obtain
\begin{align}
&x(t,0,p,q,0)\preceq p,\ \forall t \geq 0,  \label{44}\\
&y(t,0,p,q,0)\preceq (1-\mu)q,\ \forall t \geq 0, \label{45}
\end{align}
and the following solution comparison
\begin{align}
x(t,0,p,q,0)\preceq -A^{-1}Bq+u(t,0,p+A^{-1}Bq),\forall t \geq 0,  \label{46}
\end{align}
By using Theorem \ref{thm1} for system \eqref{43} with $\overline{\theta}=p+A^{-1}Bq$ and $\delta=(1-\mu)p+A^{-1}Bq$,  we find the smallest time $T$ such that
\begin{align}\label{47}
u(t,0,p+A^{-1}Bq)&\preceq (1-\mu)p+A^{-1}Bq,\forall t \geq T,
\end{align}
which follows that
\begin{align}\label{48}
x(t,0,p,q,0)\preceq (1-\mu)p,\forall t \geq T.
\end{align}
Set $T^*=\max\{T,h_M\}$. Then, from \eqref{45} and \eqref{48}, we have
\begin{align}\label{49}
\begin{split}
&x(t,0,p,q,0)\preceq (1-\mu)p,\ \ \forall t\geq T^*\\
&y(t,0,p,q,0)\preceq (1-\mu)q,\ \ \forall t\geq T^*-h_M.
\end{split}
\end{align}
On the other hand,
from \eqref{1}, we have
\begin{eqnarray}\label{50}
y(t)=\begin{cases} Cx(t)+Dy(t-h_2(t))&\ \text{if}\ \ h_2(t)>0\\
(I-D)^{-1}Cx(t) &\ \text{if}\ \ h_2(t)=0.
\end{cases}
\end{eqnarray}
Combining \eqref{16}, \eqref{17}, \eqref{49} and \eqref{50}, we obtain
\begin{align}\label{51}
y(t,0,p,q,0)\preceq (1-\mu)^2q,\ \ \forall t\geq T^*.
\end{align}
From \eqref{44} and \eqref{45}, we obtain inequality \eqref{40} for the case where $k=0$. From \eqref{49} and \eqref{51}, we obtain inequality \eqref{40} for the case where $k=1$.

\emph{Step 3:} In this step, we prove that inequality \eqref{40} hold for the case where $k=2$. In deed, let us consider the following system with the initial time $t_0=T^*$,
\begin{align}\label{52}
\begin{split}
\dot{x}^1(t)&=Ax^1(t)+By^1(t-h_1(t)),\ t\geq T^*\geq 0, \\
y^1(t)&=Cx^1(t)+Dy^1(t-h_2(t)).
\end{split}
\end{align}
Similar to Step 2, we also prove that
\begin{align}\label{53}
\begin{split}
x^1(t,T^*,p,q,0)&\preceq   (1-\mu)p,\ \forall t \in [2T^*, 3T^*),  \\
y^1(t,T^*,p,q,0)&\preceq   (1-\mu)^2q, \ \forall t \in [2T^*, 3T^*).
\end{split}
\end{align}
By using the linearity of system \eqref{52}, from \eqref{53}, we also obtain,  for any positive scalar $\lambda$, that
\begin{align} \label{54}
\begin{split}
x^1(t,T^*,\lambda p,\lambda q,0)&\preceq  (1-\mu)\lambda p,\ \ \forall t \in [2T^*, 3T^*), \\
y^1(t,T^*,\lambda p,\lambda q,0)&\preceq  (1-\mu)^2\lambda q, \ \ \forall t \in [2T^*, 3T^*).
\end{split}
\end{align}
Choosing $\lambda=1-\mu$ and from \eqref{54}, we have
\begin{align}\label{55}
\begin{split}
 x^1(t,T^*,(1-\mu)p,(1-\mu)q,0)&\preceq  (1-\mu)^2p,\ \ \forall t \in [2T^*, 3T^*),  \\
y^1(t,T^*,(1-\mu) p,(1-\mu)q,0)&\preceq   (1-\mu)^3q, \ \ \forall t \in [2T^*, 3T^*).
\end{split}
\end{align}
On the other hand, inequality \eqref{49} implies that, we have \begin{align}\label{56}
\begin{split}
&x(T^*,0,p,q,0)\preceq (1-\mu)p,\\
&y(t,0,p,q,0)\preceq (1-\mu)q,\ \ \forall t\in [T^*-h_M,T^*).
\end{split}
\end{align}
Combining with part (ii) of Lemma \ref{lem2}, we have
\begin{align}\label{57}
\begin{split}
x(t,0,p,q,0)&\preceq   x^1(t,T^*,(1-\mu)p,(1-\mu)q,0),\ \forall t \geq T^*,  \\
y(t,0,p,q,0)&\preceq   y^1(t,T^*,(1-\mu) p,(1-\mu)q,0), \ \forall t \geq T^*.
\end{split}
\end{align}
From \eqref{55} and \eqref{57}, we obtain
\begin{align}\label{58}
\begin{split}
x(t,0,p,q,0)&\preceq   (1-\mu)^2p,\ \ \forall t \in [2T^*, 3T^*), \\
y(t,0,p,q,0)&\preceq   (1-\mu)^3q,\ \ \forall t \in [2T^*, 3T^*).
\end{split}
\end{align}
This means that we have inequality \eqref{40}  for the case where  $k=2$. By doing similarly as above, we also obtain inequality \eqref{40}  for the cases where  $k=3,4,\cdots.$ The proof of Theorem \ref{thm2} is completed.
\end{proof}
\subsection{Componentwise bounds for positive CDDEs perturbed by
bounded disturbances}
 This subsection is to extend the above obtained result to class of CDDEs perturbed by bounded-nonzero-disturbances. For simplicity, we consider also system \eqref{1} with $t_0=0$.
\begin{theorem}\label{thm3}  Assume that conditions given in Theorem \ref{thm2} hold.  Set
\begin{align}\label{59}
\left[\begin{matrix}\eta\\ \varsigma\end{matrix}\right]
=-\left[\begin{matrix}A & B\\ C &D-I\end{matrix}\right]^{-1}
\left[\begin{matrix}\overline{\omega}\\ \overline{d}\end{matrix}\right].
\end{align}
 (i) There exist two positive vectors $p \in \mathbb{R}^n_{+}$, $q \in \mathbb{R}^m_{+}$, a scalar $\mu\in (0,1)$, a time $T^*\geq h_M$,  such that, for $k=0,1,2,\cdots,$  the following exponential componentwise estimates hold:
 \begin{align}\label{60}
 \begin{split}
x(t,0,\psi,\phi,\omega)&\preceq \eta+ (1-\mu)^kp,\ \  \forall t\in[kT^*,(k+1)T^*), \\
 y(t,0,\psi,\phi,d)&\preceq  \varsigma+(1-\mu)^{k+1}q, \ \  \forall t\in[kT^*,(k+1)T^*).
 \end{split}
\end{align}
(ii) The vector $\left[\begin{matrix}\eta\\ \varsigma\end{matrix}\right]$ is the smallest componentwise ultimate bound of system \eqref{1}.\\

(iii) The ball $\mathbb{B}\left(0,\left[\begin{matrix}\eta\\ \varsigma\end{matrix}\right]\right)$ is the smallest invariant set, which is different from $\{0\}$, of system \eqref{1}.
\end{theorem}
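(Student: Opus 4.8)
The plan is to reduce part~(i) to Theorem~\ref{thm2} by the affine change of variables that moves the origin to the equilibrium of \eqref{1} under the worst--case constant disturbances, and then to read off parts~(ii) and (iii) from \eqref{60} together with the asymptotic stability of the homogeneous CDDE. Throughout, note that by \eqref{59}, Lemma~\ref{lem1} and Remark~\ref{rem1} the vector $[\eta^{T}\ \varsigma^{T}]^{T}$ is nonnegative and is the unique solution of the steady--state equations $A\eta+B\varsigma=-\overline{\omega}$, $C\eta+(D-I)\varsigma=-\overline{d}$; in particular $(x,y)\equiv(\eta,\varsigma)$ is an equilibrium of \eqref{1} driven by $\omega\equiv\overline{\omega}$, $d\equiv\overline{d}$.

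\emph{Part (i).} Choose $p,q,\mu,T^{*}$ exactly as in Step~1 of the proof of Theorem~\ref{thm2} (so that $p\succeq\overline{\psi}$, $q\succeq\overline{\phi}$ and the tight inequalities \eqref{15}--\eqref{17} hold). Set $w(t)=x(t,0,\psi,\phi,\omega)-\eta$ and $v(t)=y(t,0,\psi,\phi,d)-\varsigma$. Using the steady--state identities above, the pair $(w,v)$ solves \eqref{1} with forcing $\omega(t)-\overline{\omega}\preceq 0$, $d(t)-\overline{d}\preceq 0$ and initial data $w(0)=\psi(0)-\eta\preceq\overline{\psi}\preceq p$, $v(s)=\phi(s)-\varsigma\preceq\overline{\phi}\preceq q$. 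By the comparison principle for positive CDDEs --- the extension of Lemma~\ref{lem2}(ii) that is also monotone in the forcing, obtained by checking that the difference between the solution of the homogeneous CDDE with data $(p,q)$ and $(w,v)$ solves a positive CDDE with nonnegative initial data and nonnegative forcing --- we get $w(t)\preceq x(t,0,p,q,0)$ and $v(t)\preceq y(t,0,p,q,0)$ for all $t\ge 0$. Inserting the estimates established for $x(\cdot,0,p,q,0)$ and $y(\cdot,0,p,q,0)$ inside the proof of Theorem~\ref{thm2}, namely $x(t,0,p,q,0)\preceq(1-\mu)^{k}p$ and $y(t,0,p,q,0)\preceq(1-\mu)^{k+1}q$ on $[kT^{*},(k+1)T^{*})$, and adding back $\eta$ and $\varsigma$ respectively, yields \eqref{60}.

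\emph{Parts (ii) and (iii).} Letting $k\to\infty$ in \eqref{60} gives $\limsup_{t\to\infty}x(t)\preceq\eta$ and $\limsup_{t\to\infty}y(t)\preceq\varsigma$, so $[\eta^{T}\ \varsigma^{T}]^{T}$ is a componentwise ultimate bound. For its minimality, consider the trajectory of \eqref{1} with zero initial data and the constant disturbances $\omega\equiv\overline{\omega}$, $d\equiv\overline{d}$; shifting by $[\eta^{T}\ \varsigma^{T}]^{T}$ as above, the shifted trajectory solves the homogeneous CDDE, which is globally asymptotically stable under the present hypotheses (decompose an arbitrary initial function into its positive and negative parts and apply the like--exponential decay of Theorem~\ref{thm2} to each summand by linearity), hence $x(t)\to\eta$ and $y(t)\to\varsigma$ componentwise, and every componentwise ultimate bound must dominate $[\eta^{T}\ \varsigma^{T}]^{T}$. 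For (iii): if $0\preceq\psi(0)\preceq\eta$ and $0\preceq\phi(s)\preceq\varsigma$, then $x(t),y(t)\succeq 0$ by positivity (Lemma~\ref{lem2}(i)), while by monotonicity in the data and the forcing $x(t,0,\psi,\phi,\omega)\preceq x(t,0,\eta,\varsigma,\overline{\omega})\equiv\eta$ and $y(t,0,\psi,\phi,d)\preceq y(t,0,\eta,\varsigma,\overline{d})\equiv\varsigma$, so $\mathbb{B}(0,[\eta^{T}\ \varsigma^{T}]^{T})$ is invariant; and if $\mathbb{B}(0,[r_{x}^{T}\ r_{y}^{T}]^{T})$ is any invariant orthotope, running \eqref{1} from $x(0)=r_{x}$, $\phi(\cdot)\equiv r_{y}$ with $\omega\equiv\overline{\omega}$, $d\equiv\overline{d}$ forces $x(t)\preceq r_{x}$, $y(t)\preceq r_{y}$ for all $t$, while the convergence just proved gives $\eta\preceq r_{x}$, $\varsigma\preceq r_{y}$, i.e.\ $\mathbb{B}(0,[\eta^{T}\ \varsigma^{T}]^{T})\subseteq\mathbb{B}(0,[r_{x}^{T}\ r_{y}^{T}]^{T})$. (When $[\overline{\omega}^{T}\ \overline{d}^{T}]^{T}\ne 0$ this set is genuinely nonzero, since the matrix in \eqref{59} is nonsingular.)

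\emph{Main obstacle.} The delicate point is the comparison step: one must justify that solutions of \eqref{1} depend monotonically not only on the initial data, as in Lemma~\ref{lem2}(ii), but also on the forcing terms, and that the shifted initial data $\psi(0)-\eta$, $\phi(\cdot)-\varsigma$ --- which are in general no longer nonnegative --- are still admissible inputs for that comparison, which they are since only the ordering of the two sets of initial data enters the argument. The second, milder, subtlety is upgrading the nonnegative--data decay of Theorem~\ref{thm2} to genuine global asymptotic stability of the homogeneous CDDE, which the positive/negative--part decomposition and linearity supply.
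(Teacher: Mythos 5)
Your proposal is correct and follows essentially the same route as the paper: shift by the equilibrium $[\eta^{T}\ \varsigma^{T}]^{T}$ of the worst-case constant-disturbance system, reduce to Theorem \ref{thm2} for parts (i) and (ii), and use the reverse shift $(\eta-x,\varsigma-y)$ together with the $\liminf$/$\limsup$ bounds for the minimality claims in (ii) and (iii). The only cosmetic difference is that the paper first majorizes by the worst-case system started from $\max\{\overline{\psi},\eta\}$, $\max\{\overline{\phi},\varsigma\}$ (so that the shifted initial data stay nonnegative) and shifts afterwards, whereas you shift first and then invoke the forcing-monotone comparison for sign-indefinite shifted data — a point you correctly flag and which both arguments ultimately justify by the same nonnegativity-of-the-difference reasoning.
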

\begin{proof} (i) \ Let us consider the following system:
\begin{align}\label{61}
\begin{split}
\dot{\overline{x}}(t)&=A\overline{x}(t)+B\overline{y}(t-h_1(t))
+\overline{\omega},\ t\geq  0, \\
\overline{y}(t)&=C\overline{x}(t)+D\overline{y}(t-h_2(t))+\overline{d}. \end{split}
\end{align}
Set $
\left[\begin{matrix}\widehat{\psi}\\ \widehat{\phi}\end{matrix}\right]=
\max\left\{\left[\begin{matrix}\overline{\psi}\\ \overline{\phi}\end{matrix}\right],
\left[\begin{matrix}\eta\\ \varsigma\end{matrix}\right]\right\}.$  By both (i) and (ii) of Lemma \ref{lem2}, we have
 \begin{align}\label{62}
 \begin{split}
x(t,0,\psi,\phi,\omega)&\preceq \overline{x}(t,0,\widehat{\psi},\widehat{\phi},\overline{\omega}),\ \ t\geq  0, \\
 y(t,0,\psi,\phi,d)&\preceq  \overline{y}(t,0,\widehat{\psi},\widehat{\phi},\overline{d}), \ \ t\geq 0.
 \end{split}
\end{align}
Taking the following state transformation
 \begin{align}\label{63}
 \begin{split}
\check{x}(t)&=\overline{x}(t)-\eta,\\
\check{y}(t)& =\overline{y}(t)-\varsigma,
\end{split}
\end{align}
then,  we have
\begin{align}\label{64}
\begin{split}
\dot{\check{x}}(t)&=A\check{x}(t)+B\check{y}(t-h_1(t)),\ t\geq  0, \\
\check{y}(t)&=C\check{x}(t)+D\check{y}(t-h_2(t)),
\end{split}
\end{align}
and
 \begin{align}\label{65}
 \begin{split}
\check{x}(t,0,\widehat{\psi}-\eta,\widehat{\phi}-\varsigma,0)&= \overline{x}(t,0,\widehat{\psi},\widehat{\phi},\overline{\omega})-\eta, \\
\check{y}(t,0,\widehat{\psi}-\eta,\widehat{\phi}-\varsigma,0)&= \overline{y}(t,0,\widehat{\psi},\widehat{\phi},\overline{d})-\varsigma, \end{split}
\end{align}
Now, we apply Theorem \ref{lem2} for system \eqref{64} with the initial values $(\widehat{\psi}-\eta,\widehat{\phi}-\varsigma)$, there exist two positive vectors $p,q$, a scalar $\mu\in(0,1)$ and the time $T^*\geq h_M$
such that, for $k=0,1,2,\cdots,$
\begin{align}\label{66}
 \begin{split}
\check{x}(t,0,\widehat{\psi}-\eta,\widehat{\phi}-\varsigma,0)&\preceq (1-\mu)^kp,\ \  \forall t\in[kT^*,(k+1)T^*),  \\
\check{y}(t,0,\widehat{\psi}-\eta,\widehat{\phi}-\varsigma,0)&\preceq (1-\mu)^{k+1}q,\ \  \forall t\in[kT^*,(k+1)T^*).
\end{split}
\end{align}
From \eqref{62}, \eqref{65} and \eqref{66}, we obtain inequalities \eqref{60}.\\

(ii) From \eqref{60}, by letting $t$ tend to infinity, we obtain,
 \begin{align}\label{67}
 \begin{split}
\lim\sup_{t\rightarrow \infty}x(t,0,\psi,\phi,\omega)&\preceq \eta, \\
 \lim\sup_{t\rightarrow \infty}y(t,0,\psi,\phi,d)&\preceq  \varsigma.
 \end{split}
\end{align}
This follows that the vector $\left[\begin{matrix}\eta\\ \varsigma\end{matrix}\right]$ is a componentwise ultimate bound of system \eqref{1}. In order to prove that the vector $\left[\begin{matrix}\eta\\ \varsigma\end{matrix}\right]$ is the smallest
componentwise ultimate bound, we consider the case where $\omega(t)\equiv \overline{\omega}$ and $d(t)\equiv \overline{d}$ and take the following state transformation:
\begin{align}\label{68}
\begin{split}
\widetilde{x}(t)&=\eta-x(t),\\
\widetilde{y}(t)& =\varsigma-y(t),
\end{split}
\end{align}
then, from  \eqref{1}, we obtain
\begin{align}\label{69}
\begin{split}
\dot{\widetilde{x}}(t)&=A\widetilde{x}(t)+B\widetilde{y}(t-h_1(t)),\ t\geq  0, \\
\widetilde{y}(t)&=C\widetilde{x}(t)+D\widetilde{y}(t-h_2(t)),
\end{split}
\end{align}
and
 \begin{align}\label{70}
\begin{split}
\widetilde{x}(t,0,\eta,\varsigma,0)&= \eta-x(t,0,0,0,\overline{\omega}), \\
\widetilde{y}(t,0,\eta,\varsigma,0)&= \varsigma-y(t,0,0,0,\overline{d}). \end{split}
\end{align}
Now, we apply Theorem \ref{lem2} for system \eqref{69} with the initial values $(\eta,\varsigma)$, there exist two positive vectors $p,q$, a scalar $\mu\in(0,1)$ and the time $T^*\geq h_M$
such that, for $k=0,1,2,\cdots,$
\begin{align}\label{71}
\begin{split}
\widetilde{x}(t,0,\eta,\varsigma,0)&\preceq (1-\mu)^kp,\ \  \forall t\in[kT^*,(k+1)T^*),  \\
\widetilde{y}(t,0,\eta,\varsigma,0)&\preceq (1-\mu)^{k+1}q,\ \  \forall t\in[kT^*,(k+1)T^*).
\end{split}
\end{align}
From \eqref{70}, \eqref{71}, we have
\begin{align}\label{72}
\begin{split}
\eta- (1-\mu)^kp      &\preceq x(t,0,0,0,\overline{\omega}),\ \  \forall t\in[kT^*,(k+1)T^*),  \\
\varsigma- (1-\mu)^{k+1}q &\preceq y(t,0,0,0,\overline{d}),\ \  \forall t\in[kT^*,(k+1)T^*).
\end{split}
\end{align}
Letting $t$ tend to infinity, we obtain
\begin{align}\label{73}
\begin{split}
\eta  &\preceq \lim\inf_{t\rightarrow \infty} x(t,0,0,0,\overline{\omega}),\\
\varsigma &\preceq \lim\inf_{t\rightarrow \infty}y(t,0,0,0,\overline{d}).  \end{split}
\end{align}
This implies that the vector $\left[\begin{matrix}\eta\\ \varsigma\end{matrix}\right]$ is the smallest
componentwise ultimate bound. \\

(iii) For the case where $\left[\begin{matrix}\overline{\psi}\\ \overline{\phi}\end{matrix}\right] =
\left[\begin{matrix}\eta\\ \varsigma\end{matrix}\right]$, then
$
\left[\begin{matrix}\widehat{\psi}\\ \widehat{\phi}\end{matrix}\right]=
\left[\begin{matrix}\eta\\ \varsigma\end{matrix}\right]$. Substituting this equality into \eqref{65}, we obtain
\begin{align}\label{74}
\begin{split}
\overline{x}(t,0,\widehat{\psi},\widehat{\phi},\overline{\omega})\equiv\eta, \\
\overline{y}(t,0,\widehat{\psi},\widehat{\phi},\overline{d})\equiv\varsigma.
\end{split}
\end{align}
Combining with \eqref{62}, we obtain
\begin{align}\label{75}
\begin{split}
x(t,0,\psi,\phi,\omega)&\preceq \eta,\ \ t\geq  0, \\
 y(t,0,\psi,\phi,d)&\preceq  \varsigma, \ \ t\geq 0,
 \end{split}
\end{align}
From \eqref{73} and \eqref{75}, we conclude that the ball $\mathbb{B}\left(0,\left[\begin{matrix}\eta\\ \varsigma\end{matrix}\right]\right)$ is the smallest invariant set, which is different from $\{0\}$, of system \eqref{1}.
The proof of Theorem \ref{thm3} is completed.
\end{proof}
 From the above, an algorithm  to compute the
smallest possible componentwise state bound for system \eqref{1} is given as below \\

{\small \centering
\begin{tabular}{l}
\hline
\textbf{Algorithm 1} Computing componentwise state bound \\
\hline
\emph{Step 1:}\ input $A$, $B$, $C$, $D$, $n$, $m$,  $
\overline{\omega}$, $\overline{d}$, $h_M$, $\overline{\psi}$, $\overline{\phi}$,\\
\emph{Step 2:}\ compute $
\left[\begin{matrix}\eta\\ \varsigma\end{matrix}\right]
=-\left[\begin{matrix}A & B\\ C &D-I\end{matrix}\right]^{-1}
\left[\begin{matrix}\overline{\omega}\\ \overline{d}\end{matrix}\right].
$\\
\hspace{1cm} if $\left[\begin{matrix}\overline{\psi}\\ \overline{\phi}\end{matrix}\right]\preceq
\left[\begin{matrix}\eta\\ \varsigma\end{matrix}\right]$,
the componentwise bound is $\left[\begin{matrix}\eta\\ \varsigma\end{matrix}\right]$\\
\hspace{1cm} else, proceed to Step 3\\
\hspace{1cm} end\\
\emph{Step 3:}\ (Finding vectors $p,q$ and $\mu$)\\
\hspace{1cm} $\left[\begin{matrix}\widehat{\psi}\\ \widehat{\phi}\end{matrix}\right]=\max\left\{\left[\begin{matrix}\overline{\psi}\\ \overline{\phi}\end{matrix}\right],
\left[\begin{matrix}\eta\\ \varsigma\end{matrix}\right]\right\}$, replace $\left[\begin{matrix}\overline{\psi}\\ \overline{\phi}\end{matrix}\right]=
\left[\begin{matrix}\widehat{\psi}-\eta\\ \widehat{\phi}-\varsigma\end{matrix}\right]
$,\\
\hspace{1cm} set $\xi=[1 \cdots 1]^T$,  compute
$\left[\begin{matrix}\widetilde{p}\\
\widetilde{q}\end{matrix}\right]=\left[\begin{matrix}A &B\\ C &D-I\end{matrix}\right]^{-1}\xi$, \\
\hspace{1cm} $\varrho=\max\{\frac{\overline{\psi}_1}{\widetilde{p}_1},\cdots,
\frac{\overline{\psi}_n}{\widetilde{p}_n},
\frac{\overline{\phi}_1}{\widetilde{q}_1},\cdots,
\frac{\overline{\phi}_m}{\widetilde{q}_m}\}$,\\
\hspace{1cm}   $p=\varrho \widetilde{p}$,\ $q=\varrho \widetilde{q}$.\\
\emph{Step 4:}\ (Finding $\mu$)\\
\hspace{1cm} Compute $M_1=-A^{-1}Bq$, $M_2=(I-D)^{1}Cp$\\
\hspace{1cm}  $M_3=Cp+Dq$ and obtain\\ \hspace{1cm} $\mu=1-\max\left\{\frac{e_i^TM_1}{e_i^Tp},
\frac{l_j^TM_2}{l_j^Tq}, \frac{l_j^TM_3}{l_j^Tq} \right\}_{i=\overline{1,n},j=\overline{1,m}}$\\
\emph{Step 5:} (Finding $\alpha_{max}$)\\
 \hspace{1cm} $step_1=0.001$, $\alpha=0$ \\
\hspace{1cm} while $\mu(A+\alpha I)< 0$\\
\hspace{2cm} $\alpha=\alpha+step_1$\\
\hspace{1cm} end\\
\hspace{1cm} $\alpha_{max}=\alpha-step_1$.\\
\emph{Step 6:}\ (Finding $T^i,i=1,\cdots,n$ and $T^*$)\\
\hspace{1cm} Set $\overline{\theta}=p+A^{1}Bq$,\ $\delta=(1-\mu)p+A^{1}Bq$, \\
\hspace{1cm} for $i=1:1:n$\\
\hspace{1.2cm} for $\alpha=0:step_1:\overline{\alpha}$\\
\hspace{1.5cm} compute $\gamma_i$ ( by \eqref{32})\\
\hspace{1.5cm} obtain $t^i_{\alpha}$ (by \eqref{34})\\
\hspace{1.2cm} end\\
\hspace{1.2cm} $T^i=\min_{\alpha\in [0,\alpha_{max}]}t^i_{\alpha}$\\
\hspace{1cm} end\\
\hspace{1cm} Obtain  $T=\max\{T^1,\cdots,T^n\}$, $T^*=\max\{T,h_M\}$\\
\hspace{1cm} and the componentwise state bound by \eqref{60}.\\
\hline
\end{tabular}}

\section{Numerical example}
\begin{example}\rm  Consider CDDEs \eqref{1}, whose matrices are chosen as same as ones given in Shen \& Zheng (2015)
\begin{align*}&A=\left[\begin{matrix}
-2.5\ &\ 0.3  &\  0\\
0.5  & -2  &\ 0.1\\
0.4  & 0.6 &\ -3
\end{matrix}\right], \ \
B=\left[\begin{matrix}
0.2  &\ 0.1\\
0.5 & \ 0.3\\
0   & \ 0.4
\end{matrix}\right],
\\
&C=\left[\begin{matrix}
0.3\ &\ 0.4  &\  0.1\\
0.2\ & \ 0.2 & \ 0
\end{matrix}\right], \ \
D=\left[\begin{matrix}
0.6\ \ &0.3\\
0.1\ \ & 0.2
\end{matrix}\right].
\end{align*}
Two disturbance vectors $\omega(t)$, $d(t)$ are bounded by $\overline{\omega}^T=[0.5\ \ 0.3\ \ 0.1]^T$ and $\overline{d}=[0.3 \ \ 0.1]^T$.
The time-varying delays $h_1(t)$, $h_2(t)$ are bounded by $h_M=2$,
The initial values $\psi(0)$ and $\phi(.)$ are bounded by
$\overline{\psi}^T=[2\ \ \ 5 \ \ \ 3]^T$ and $\overline{\phi}^T=[15\ \ \ 5]^T$.\\

By using \eqref{59}, we compute  $\eta^T=[0.7249\ \   1.4756 \ \    0.5780]^T  $, $\varsigma^T=[3.7739\ \ 1.1469]^T$. By choosing $\xi^T=[1 \ \cdots\ 1]^T$ and using Step 1 of the proof of Theorem \ref{thm3}, we find
$p^T=[2.3951 \ \ \ 5.5118\ \ \ 2.4220]^T$,
$q^T=[ 14.1659\ \ \ 4.9990]^T$ and $\mu=0.0707$. By using Step 2 of the proof of Theorem \ref{thm3} and Theorem \ref{thm1}, we find $T=1.2056$. Hence,
$T^*=\max\{T,h_M\}=2$.  As a result, we obtain componentwise state bounds of system \eqref{1} as below:
\begin{align}
\left[\begin{matrix}
x_1(t)\\
x_2(t)\\
x_3(t)
\end{matrix}\right]
&\preceq
\left[\begin{matrix}
0.7249\\
1.4756\\
0.5780
\end{matrix}\right]+
0.9293^k
\left[\begin{matrix}
2.3951\\
5.5118\\
2.4220
\end{matrix}\right],\  \forall t\in  \label{76}\\
&   \in [k2,(k+1)2), \ k=0,1,\cdots, \notag
\end{align}
and
\begin{align}
\left[\begin{matrix}
y_1(t)\\
y_2(t)
\end{matrix}\right]
&\preceq \left[\begin{matrix}
3.7739\\
1.1469
\end{matrix}\right]+
0.9293^{k+1}
\left[\begin{matrix}
14.1659\\
4.9990
\end{matrix}\right],\forall t\in  \label{77}\\
&   \in [k2,(k+1)2), \ k=0,1,\cdots. \notag
\end{align}

For a visual simulation, we choose disturbances vectors as
$$\omega(t)=a\left[\begin{matrix}
0.5|sin(0.2t)|\\
0.3|sin(0.1t)|\\
 0.1|sin(0.3t)|
\end{matrix}\right],\
d(t)=b\left[\begin{matrix}
0.3|cos(0.1t)|\\
0.1|cos(0.2t)|
\end{matrix}\right],$$
where $a\in\{0,\ 0.5,\ 1\}$;  $b\in\{0,\ 1\}$, two time-varying delays as   $h_1(t)=1+|\sin(t)|$,\ $h_2(t)=1+|\cos(t)|$,  and initial values
$\psi=\overline{\psi}$, $\phi=\overline{\phi}$.
The following figures  shows that the trajectories of
  the partial state vectors of system \eqref{1} are bounded by upper bounds computed by Theorem \ref{thm3}.
\begin{figure}[t]
\begin{center}
\begin{minipage}{85mm}
\resizebox*{8.5cm}{!}{\includegraphics{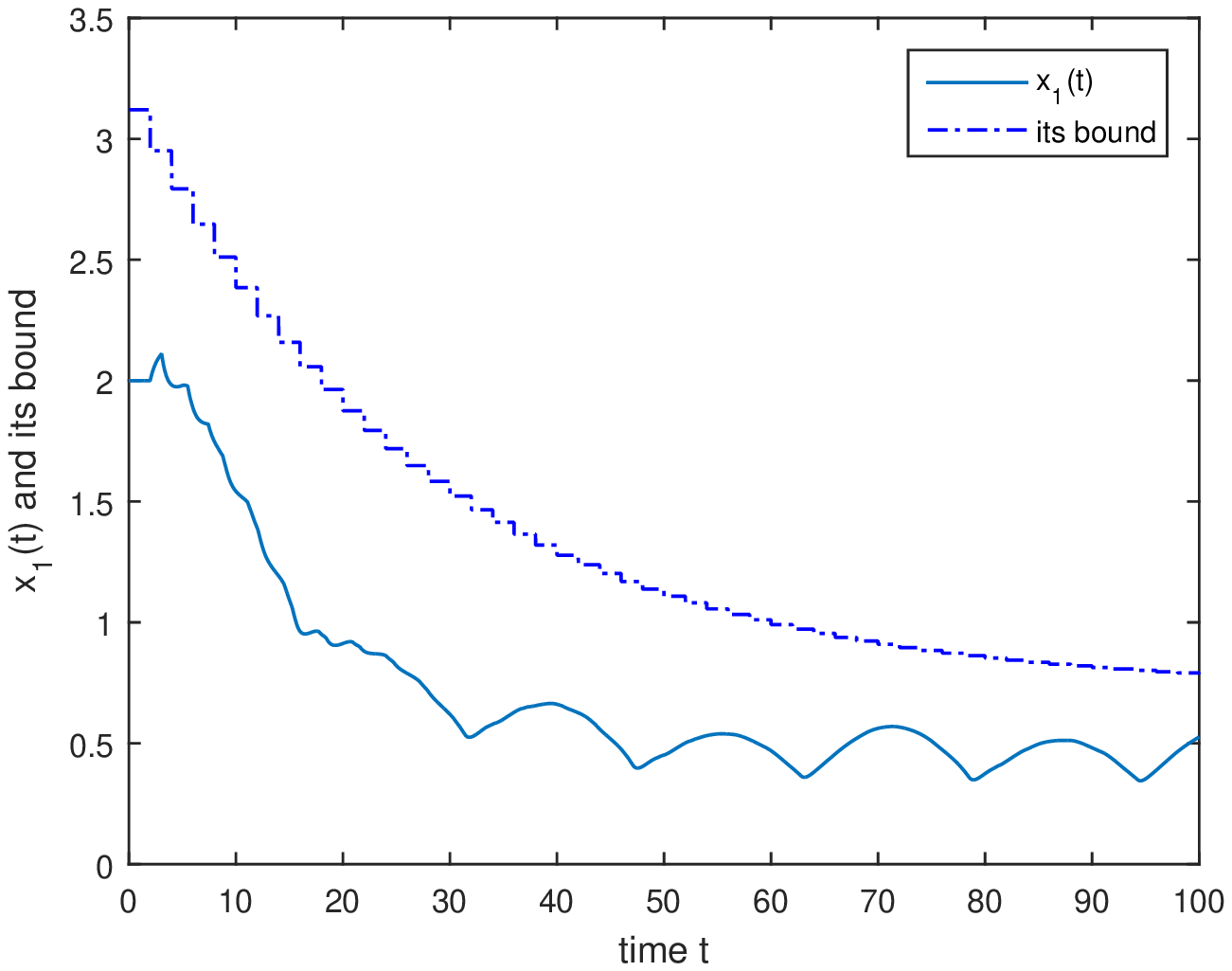}}
\caption{Trajectories of $x_1(t)$ and its bound.}%
\end{minipage}
\begin{minipage}{85mm}
\resizebox*{8.5cm}{!}{\includegraphics{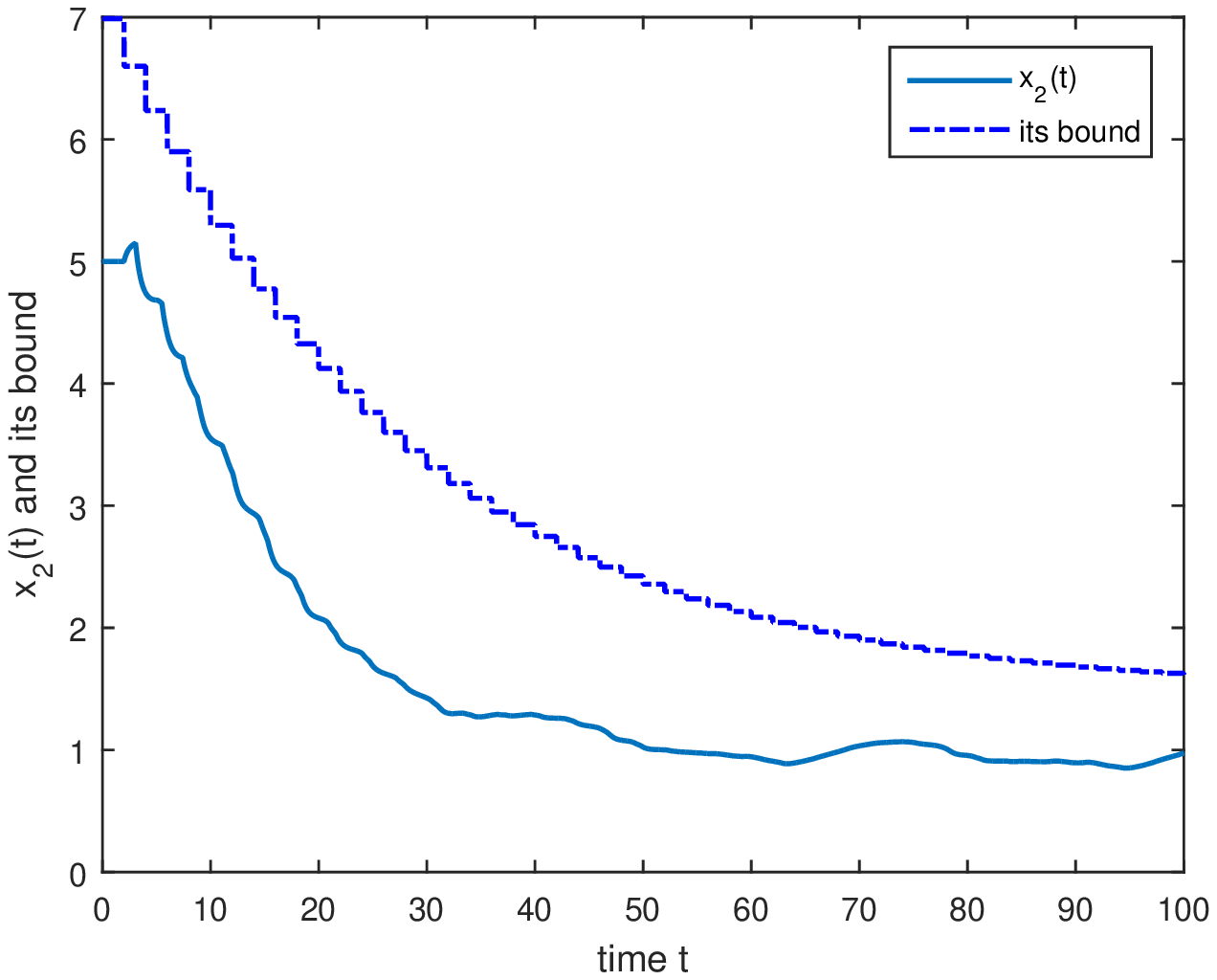}}
\caption{Trajectories of $x_2(t)$ and its bound.}%
\end{minipage}
\begin{minipage}{85mm}
\resizebox*{8.5cm}{!}{\includegraphics{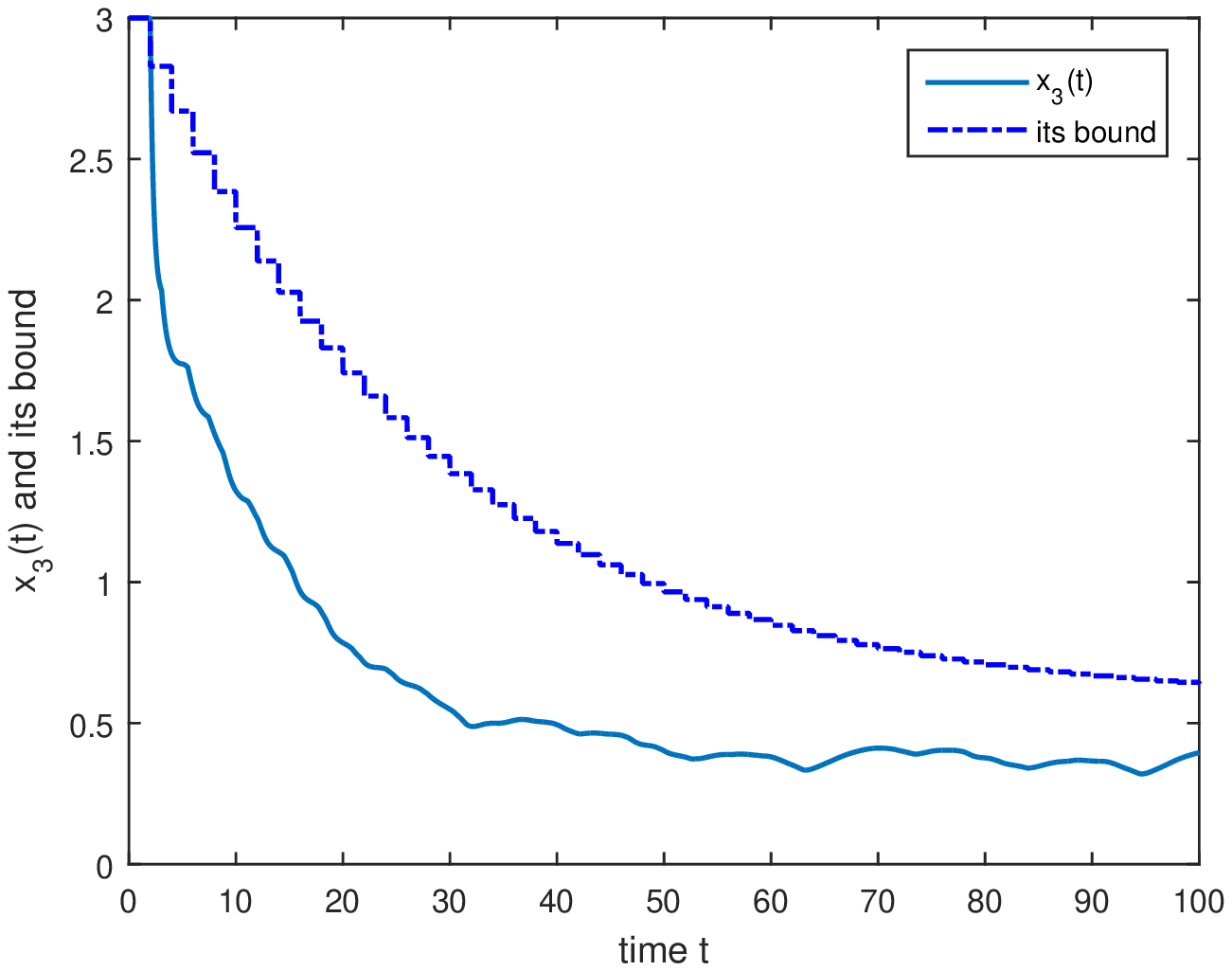}}
\caption{Trajectories of $x_3(t)$ and its bound.}%
\end{minipage}
\end{center}
\end{figure}
\begin{figure}[t]
\begin{center}
\begin{minipage}{85mm}
\resizebox*{8.5cm}{!}{\includegraphics{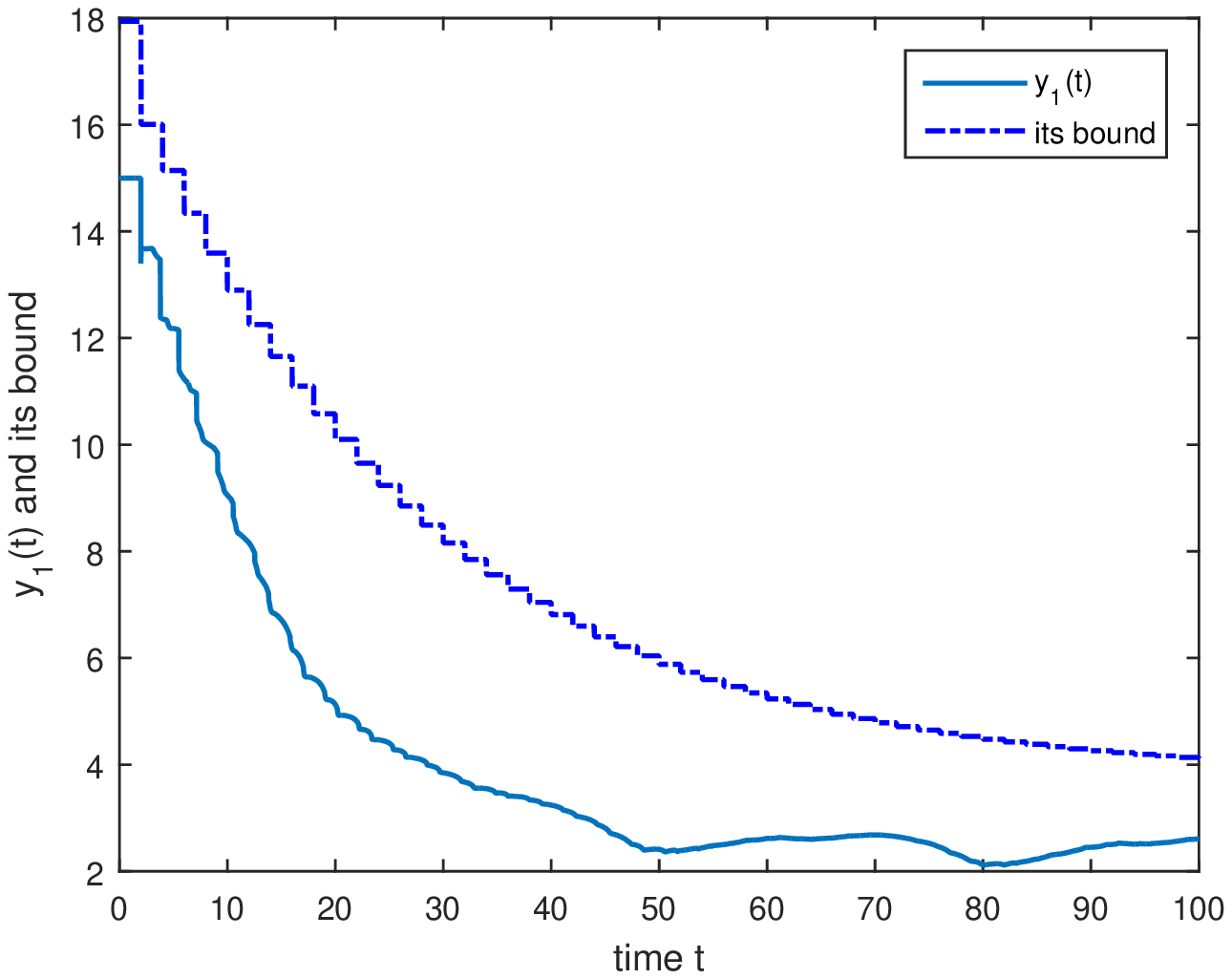}}
\caption{Trajectories of $y_1(t)$ and its bound.}%
\end{minipage}
\begin{minipage}{85mm}
\resizebox*{8.5cm}{!}{\includegraphics{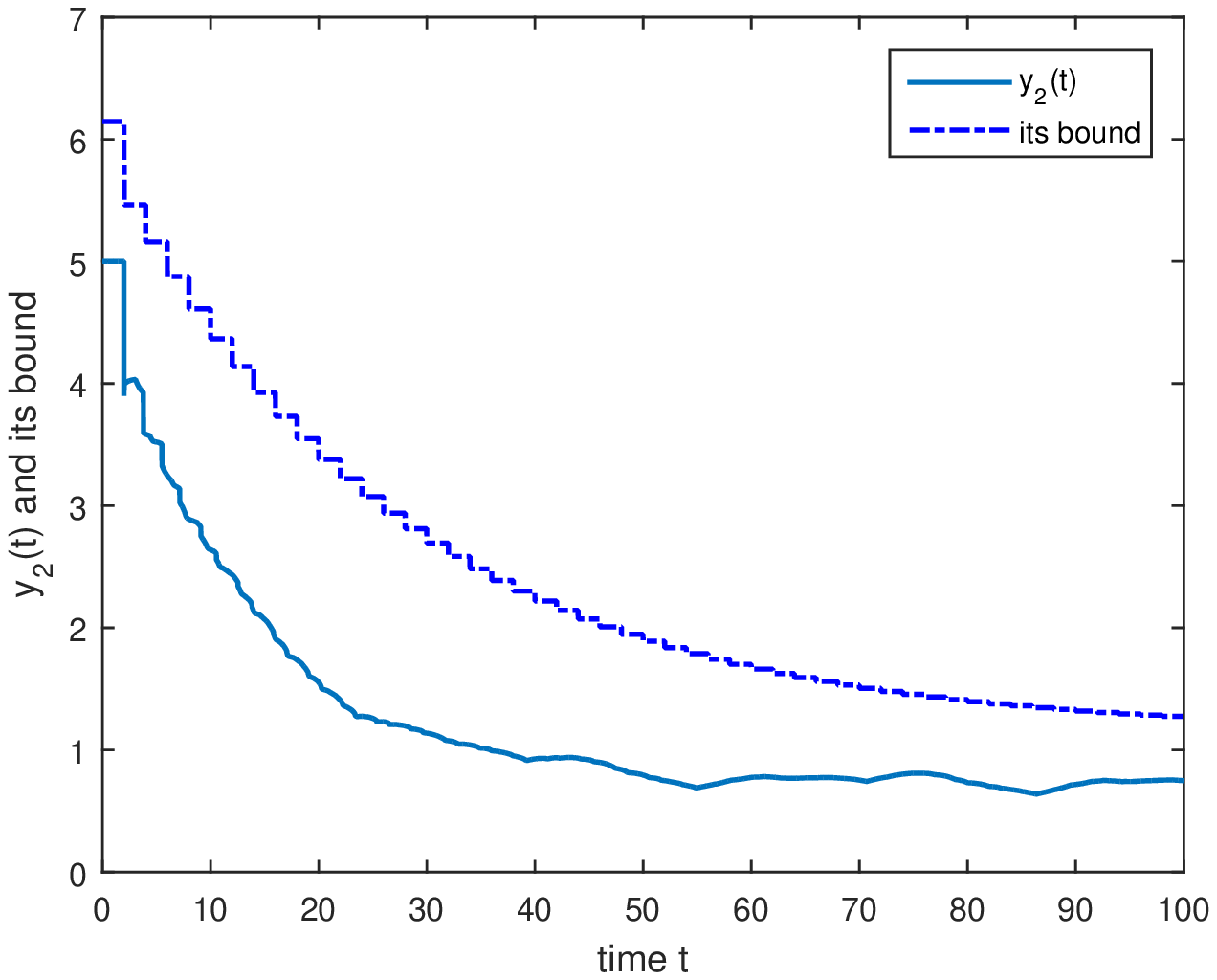}}
\caption{Trajectories of $y_2(t)$ and its bound.}%
\end{minipage}
\end{center}
\end{figure}

%
%

\end{example}
\section{Conclusion}\label{sec10}
This paper has studied the problem of finding state bounds for a class of positive CDDEs perturbed by unknown-but-bounded disturbances. A novel method to derive componentwise state bounds on infinite time horizon, the smallest ultimate bound and the smallest invariant set for the system has been presented.
A numerical example is considered to illustrate the obtained result.
\section{Acknowledgments}\label{sec11}
This work was supported by the Vietnam Institute for Advantaged Study Mathematics
and the National Foundation
for Science and Technology Development, Vietnam under the grant 101.01-2017.300.

\end{document}